\def\cleardoublepage{\clearpage\if@twoside \ifodd\c@page\else
    \hbox{}
    \thispagestyle{plain}
    \newpage
    \if@twocolumn\hbox{}\newpage\fi\fi\fi}
\newcommand{\customlabel}[2]{%
\protected@write \@auxout {}{\string \newlabel {#1}{{#2}{}}}}
\makeatother \clearpage{\pagestyle{plain}\cleardoublepage}
\newcommand{\RR}{{\rm I\hspace{-0.50ex}R}}
\newcommand{\NN}{{\rm I\hspace{-0.50ex}N}}
\newcommand{\Un}{{\rm I\hspace{-1.40ex}1}}
\newcommand{\integset}[2]{[\hspace{-0.35ex}[ #1,#2 ]\hspace{-0.35ex}]}
\newcommand{\epsi}{\varepsilon}
\newtheorem{defin}{Definition}
\newtheorem{theorem}{Theorem}
\newtheorem{lemma}{Lemma}[theorem]
\newtheorem{proposition}{Proposition}[theorem]
\newenvironment{proof}{{\bf Proof}}{$\Box$.}
\definecolor{MyGray}{rgb}{0.96,0.97,0.98}
   {\framed\textbf{\textsf{ TO BE REMEMBERED\vspace*{1em}}}\newline }%
   {\endframed}
\begin{document}

\title{Differentiation and regularity of semi-discrete optimal transport with respect to the parameters of the discrete measure}
\author{Fr\'ed\'eric de Gournay \and Jonas Kahn \and L\'eo Lebrat}


\maketitle

\abstract{ This paper aims at determining under which conditions the semi-discrete optimal transport is twice differentiable with respect to the parameters of the discrete measure and exhibits numerical applications. The discussion focuses on minimal conditions on the background measure to ensure differentiability. We provide numerical illustrations in stippling and blue noise problems.
}

\section{Introduction}

Optimal transport \cite{monge, kantorovich} is a blossoming subject that has known major breakthroughs these last decades. Its applications range from finance \cite{pages2012optimal}, mesh generation \cite{du2002grid}, PDE analysis \cite{jordan1998variational} and imaging \cite{rubner2000earth,wang2013linear} to machine learning and clustering \cite{solomon2014wasserstein,flamary2016wasserstein}.

This paper is limited to the semi-discrete case, which consists in transporting discrete measures (Dirac masses) towards a background measure. However we allow more general background measures than the densities with respect to the Lebesgue measure that are usually found in the literature. In this setting, we prove second order differentiability of the optimal transport distance for an arbitrary cost with respect to the locations of the Dirac masses.

Precursors include De Goes \cite{de2014geometric} (Proposition 2.5.4) who has given  the formula of the Hessian in the Euclidean setting. However he has given no proof of existence. We will make use of the framework developed by  Kitagawa and Mérigot \cite{kitagawa2016newton} to overcome these restrictions.

As a by-product, we obtain the second order differentiability conditions for the so-called energy of a Voronoi diagram. 
The latter remark generalizes results presented in \cite{du2006convergence,liu2009centroidal} to higher dimensions and lower regularity of the background measure.

\subsection{Semi-discrete optimal transport}

The optimal transport \cite{villani} between two probability measures $\mu$ and $\nu$ defined respectively on the spaces $X$ and $Y$ with cost $c:Y\times X\rightarrow \RR^+$ is the minimization problem :
\begin{align}
    \label{eq::OT}
    \min_{\gamma \in \Pi(\nu,\mu)} \int_{Y\times X} c(y,x)d\gamma(y,x),
\end{align}
where $\Pi(\nu,\mu)$ is the set of positive measures on $Y\times X$ with marginal distributions on $Y$ (resp. $X$) equal to $\nu$ (resp. $\mu$), that is :
\[\gamma \in \Pi(\nu,\mu) \Leftrightarrow \begin{cases}
    \int_Y\phi(y)d\mu(y)=\int_{Y\times X}\phi(y)d\gamma(y,x) \quad \forall \phi \in L^1(\mu)\\ \text{ and } \\ 
\int_X\psi(x)d\nu(x)=\int_{Y\times X}\psi(x)d\gamma(y,x)\quad \forall \psi \in L^1(\nu)
\end{cases}.
\]

Intuitively, a coupling $\gamma$ may be seen as a way to transport the mass of $\mu$ to the mass of $\nu$. Specifically $\gamma(B,A)$ is the mass moved from $A$ to $B$. 

Interpreting $\phi$ and $\psi$ as Lagrange multipliers of the constraint  $\Pi(\mu,\nu)$ 
and using a standard inf-sup inversion (see \cite{villani} for details), one derives the Kantorovitch dual problem:
\[\sup_{\phi(y)+\psi(x)\le c(y,x)}\int_Y\phi(y)d\mu(y) +\int_X\psi(x)d\nu(x). \]
When $\phi$ is given, it can be explicitly solved in $\psi$. The corresponding optimal $\psi^\star$ is
\[\psi^\star(x)=\inf_{y\in Y} (c(y,x)-\phi(y)).\]
Hence the problem can be cast as
\begin{equation}
\label{eq::defin:g}
\sup_{\phi} g(\phi) \quad \text{ with } \quad g(\phi)=\int_Y\phi(y)d\mu(y) +\int_X \inf_{y\in Y}  (c(y,x)-\phi(y))d\nu(x),
\end{equation}
where the function $g$, as a dual function, is naturally concave. 

Suppose now that the support of $\nu$ is included in a bounded convex Lipschitz set $\Omega \subset \RR^d$, and that $\mu$ is a discrete measure on $\RR^d$, that is, given $n \in \NN$, there exists $z=(z^i)_{i=1..n}$ with $z^i\in \RR^d$ and $m=(m^i)_{i=1..n} \in \RR^n$ such that
\[\mu =\sum_{i=1}^n m^i\delta_{z^i}, \]
where $\delta_{z^i}$ is a Dirac measure located at $z^i$. In this case the set of test functions $\phi$ can be identified to $\RR^n$, so that $\phi=(\phi^i)_{i=1..n} \in \RR^n$. The computation of $\psi^\star$ is then easily given as
\begin{equation}
\label{eq::defin::psi}
\psi^\star(x)=\min_{i\in \integset{1}{n}}\psi^i(x) \quad \text{ with }\psi^i(x)=c(z^i,x)-\phi^i
\end{equation}

Finally introducing the Laguerre tessellation \cite{aurenhammer1987power} defined by its cells
\begin{equation}
\label{eq::Laguerre}
\mathcal L_i(z,\phi)=\{x\in \Omega \text { such that } \psi^i(x)\le \psi^j(x)\quad \forall j\in \integset{1}{n}\},
\end{equation}
we have $\psi^\star=\psi^i$ on $\mathcal L_i(z,\phi)$ so that the final formulation of the optimal transport problem \eqref{eq::OT} in the semi-discrete setting is
\begin{align}
    & \phantom{=} \sup_\phi g(\phi,z,m)  \qquad \text{with} \notag \\
    \label{eq::defin:g2} g(\phi,z,m) & =\sum_{i=1}^n \int_{\mathcal L_i(z,\phi)} \!\! \left(c(z^i,x)-\phi^i\right)\frac{1}{\#{\mathcal M}^{-1}(\{x\})}d\nu(x) + \sum_{i=1}^n \phi^i m^i,
\end{align}
where $\#{\mathcal M}^{-1}(\{x\})$ is defined in Section~\ref{sec::hyp} as counting factor of the number of Laguerre cells that intersect at point $x$.

The Laguerre cells $\mathcal{L}_i $ associated to an optimal $\phi$ in the maximization \eqref{eq::defin:g} are the ``arrival'' zones of the mass at each $z^i$ by the corresponding coupling $\gamma$. We call such a tessellation an optimal Laguerre tessellation.

We aim at studying the differentiation properties up to the second order of $g(\phi,z,m)$ with respect to its parameters. The differentiation with respect to $m$ is rather straightforward and will not be discussed hereafter. The second order differentiability of $g$ with respect to $\phi$ is known \cite{merigot2013comparison,de2012blue,levy2015numerical} and proved in \cite{kitagawa2016newton}. This proof mainly uses that the Laguerre cells $\mathcal L_i$ are the intersection for all $j\ne i$ of the sub-level sets (with respect to the value of $\phi^i-\phi^j$) of the function $x\mapsto c(z^i,x)-c(z^j,x)$. Using the co-area formula, the authors are able to compute the differential of $g$ with respect to $\phi$. Differentiating with respect to $z$ is more involved and is the main goal of the present paper.

\subsection{Link with Voronoi diagrams}
\label{intro::voronoi}

The Voronoi diagram $\{\mathcal{V}_i \}_i$ is the special case of the Laguerre tessellation when $\phi=0$ and the cost is the square Euclidean distance, as can be seen from 
definitions \eqref{eq::defin::psi} and \eqref{eq::Laguerre}.

Moreover, Aurenhammer \cite{Aurenhammer1998} has proved that for any choice of Lagrange multipliers $\phi$, there is a vector of masses $(m^i)_i$ such that the solution of \eqref{eq::defin:g} is given by $\phi$. Indeed, the choice $m^i = \nu(\mathcal L_i(z, \phi))$ turns $\phi$ into a critical point of the concave function $g$.

Hence the Voronoi diagram is the optimal Laguerre tessellation for the choice of mass $\tilde m^i := \nu(\mathcal{V}_i(z))$.

Moreover, the mass  $\tilde m$ is optimal in the following sense:
\begin{align}
    \label{eq::energy}
    g(z, \tilde m, 0) & = \sup_{\phi} g(z, \tilde m, \phi) \notag   \\
                      & = \inf_{m} \sup_{\phi} g(z, m, \phi).
\end{align}
A first explanation is that $\phi$ is a Lagrange multiplier for the mass constraint, so that the solution for $\phi = 0$ will be optimal for the mass. 
Another more physical interpretation is that, without mass constraints, the best way to transport a measure $\nu$ to a finite number of points is to send each part of $\nu$ to its closest neighbour. Hence we build the Voronoi diagram of the points.

The expression \eqref{eq::energy} has been coined as the energy of the  Voronoi diagram:
\begin{equation}
\label{eq::defin::Voronoienergy}
G_S(z)=g(0,z,\tilde m)=\sum_i \int_{\mathcal V_i(z)}\Vert x-z^i\Vert^2d\nu(x)
\end{equation}

Finding critical points of this energy $G$  is also known as the centroidal Voronoi tessellation (CVT) problem. Indeed, at a critical point $\bar z$ of $G$, each $\bar z^i$ is the barycenter of $\mathcal V_i(\bar z)$ with respect to the measure $\nu$:
\[\bar z^i \int_{\mathcal V_i(\bar z)}d\nu(x) = \int_{\mathcal V_i(\bar z)}xd\nu(x).\]

Results of second order differentiability of $G$ with respect to $z$ has been proven in \cite{liu2009centroidal} and inferred in many different previous papers \cite{iri1984fast,asami1991note,du1999centroidal}. 
However those papers do not tackle the question of the regularity of $\nu$. Moreover the Voronoi setting is Euclidean. On both of these points, our work is a generalization, since differentiability of $g$ immediately implies differentiability of $G$.

\subsection{Organization of the paper}
In Section~\ref{sec::Main results Section}, the main result is given. The hypotheses needed to ensure second order differentiability are given in \ref{sec::hyp}, the result is stated in Section~\ref{sec::MainResult}, Theorem~\ref{prop::2differentiability} and is reformulated in the Euclidean case in Section~\ref{sec::euclideancase}. The rest of Section~\ref{sec::Main results Section} is devoted to the proof of Theorem~\ref{prop::2differentiability}. Section~\ref{sec::numresult} presents some numerical results.

\section{Second order differentiability}
\label{sec::Main results Section}
The main goal of this section is to state, in Theorem~\ref{prop::2differentiability}, the sufficient conditions that ensure differentiability of second order of the function 
\begin{align*}
    \tilde g(\phi,z)  & = \int_{\Omega} \psi^{\star}(x) \mathrm{d}\nu(x) \\
                     & = \int_{\Omega } \min_i c(z^i, x) - \phi^i \ \mathrm{d}\nu(x),
\end{align*}
which yields immediately the second order derivatives of $g=\tilde g+\phi\cdot m$ defined in \eqref{eq::defin:g}.

\subsection{Hypotheses and notation}
\label{sec::hyp}


In order to state the hypotheses required for Theorem~\ref{prop::2differentiability}, additional notation is required.
\begin{figure}
\begin{tikzpicture}[scale=5]

\coordinate (A124) at (0,0);
\coordinate (A132) at (0.2,0.8);
\coordinate (A134) at (2,0.2);
\fill [color=gray!10] (A124) -- (A132) -- (A134) --cycle;
\draw (A124) -- (A132) node[midway,above,sloped] {$\partial \mathcal L_1 \cap \partial \mathcal L_2$} ;
\draw (A132) -- (A134) node[midway,above,sloped] {$e^{1,3}$};
\draw (A134) -- (A124) node[midway,below,sloped] {$e^{1,4}$};
\draw [dashed] (0,0.9) -- ++(2,0.1) node[midway,below,sloped] {$e^{1,10}$};
\draw [dashed] (A124) -- ++(-0.05,-0.2) node[midway,above,sloped]  {$e^{1,2}$} ;
\draw [dashed] (A124) -- ++(-0.05,-0.005)  ;
\draw [dashed] (A132) -- ++(0.05,0.2) ;
\draw [dashed] (A132) -- ++(-0.18,0.06)  ;
\draw [dashed] (A134) -- ++(0.18,-0.06) ;
\draw [dashed] (A134) -- ++(0.18,0.018)  ;

\draw (0.6,0.3) node {$\mathcal L_1$};
\draw (0.1,0.7) node {$\mathcal L_2$};
\draw (1.6,0.8) node {$\mathcal L_3$};
\draw (1.5,0) node {$\mathcal L_4$};

\coordinate (varepsBL) at (-.12,-.2);
\coordinate (varepsTL) at (-.16,.17);
\coordinate (varepsBR) at (2.18,.03);
\coordinate (varepsTR) at (2.15,.4);
\fill [color=gray!90,fill opacity=0.2] (varepsBL) -- (varepsBR) -- (varepsTR) -- (varepsTL) ;
\draw (varepsBL) -- (varepsBR);
\draw [color=gray!50] (varepsBL) -- (varepsBR);
\draw [color=gray!50] (varepsTL) -- (varepsTR);
\draw (2.07,.05) node[above] {$\mathcal N_{1,4}(\varepsilon)$};
\draw[<->,color=gray!90] (2,.01) -- (1.96,.38);
\end{tikzpicture}
    \caption{Typical example of $e^{ik}$ and the neighborhoods $\mathcal N_{ik}(\varepsilon)$ for $i=1$.}
\label{fig::explain-notations}
\end{figure}
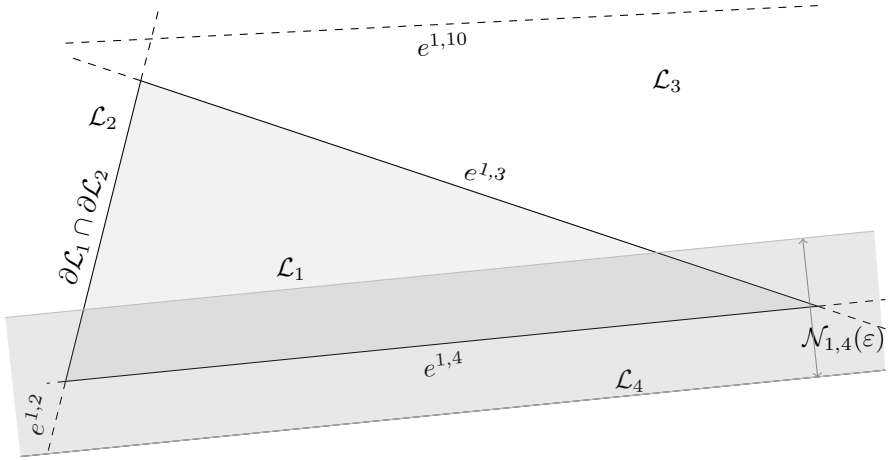

For all $x\in \Omega$, denote $\mathcal M(x)$ the subset of $\integset{1}{n}$  given by
\begin{equation}
\label{eq::defin:M}
\mathcal M(x)=\left \{ i \in \integset{1}{n} \text{ s.t } c(z^i,x)-\phi^i\le c(z^j,x)-\phi^j \quad \forall {j} \in \integset{1}{n} \right\}.
\end{equation}
The Laguerre cell $\mathcal L_i(z,\phi)$ is then exactly given by
 \[x \in \mathcal L_i(z,\phi) \Leftrightarrow i \in \mathcal M(x).\]

For $i\ne k \in \integset{1}{n}$, denote
\begin{equation}
\label{eq::defin:eik}
e^{ik}:=\{x  \text{ s.t. } c(z^i,x)-\phi^i=c(z^k,x)-\phi^k\}.
\end{equation}

Note that $\mathcal L_i \cap \mathcal L_k$ is included in $e^{ik}$ but the converse fails to be true. First notice that $e^{ik}$ is not included in $\Omega$ whereas the Laguerre cells are included in $\Omega$ by definition. Second  $e^{ik}$ is only the ``competition zone" between the $i^{th}$ and the $k^{th}$ Laguerre cells but it may (and will) happen that $x\in e^{ik}$ is included in $\mathcal L_j$ for some other $j$ and in neither $\mathcal L_i$ nor $\mathcal L_k$.

For all $i$ and $k$, we denote the $\varepsilon$-neighborhood of $e^{ik}$ by $\mathcal{N}_{ik}(\varepsilon)$. By convention, $e^{i0}$ is the boundary of $\Omega$ and thus $\mathcal{N}_{i0}(\varepsilon)$ is an $\varepsilon$-neighborhood of $\partial \Omega $. 
Figure~\ref{fig::explain-notations} illustrates these geometric objects.

The geometric hypotheses on the Laguerre tessellation that are required for second order differentiability are :
\begin{defin}[\ref{2nd-order-diff}]
\customlabel{2nd-order-diff}{Diff-2}
We say that hypothesis $\eqref{2nd-order-diff}$ holds iff
\begin{itemize}
\item 
for all $1\le i\le n$, $(z,x)\mapsto c(z^i,x)$ is $W^{2,\infty}( B(z_0,r)\times\Omega)$, where $B(z_0,r)$ is a ball around the point $z_0$.
\item there exits $\epsi>0$ such that for all $1\le k\ne i\le n$, $ \forall x\in e^{ik}$ 
\begin{equation}
\Vert \nabla_x c(z^i,x)-\nabla_x c(z^k,x)\Vert \ge \epsi .
\label{2nd-order-diffa}
\tag{\ref{2nd-order-diff}-a}
\end{equation}
\item for all $i$, there exists $s$ small enough and $C$ such that for all $0\le k\ne j\le n$, for all $0<\varepsilon,\varepsilon'<s$, it holds
\begin{equation}
    \left \vert \mathcal N_{ik}(\varepsilon) \cap \mathcal N_{ij}(\varepsilon')\right \vert \le C \varepsilon \varepsilon' .
\label{2nd-order-diffb}
\tag{\ref{2nd-order-diff}-b}
\end{equation}
and
\begin{equation}
    \lim_{\varepsilon\rightarrow 0}\sigma\left(  e^{ik} \cap \mathcal N_{ij}(\varepsilon)\right) =0 ,
\label{2nd-order-contb}
\tag{\ref{2nd-order-diff}-c}
\end{equation}
where $\sigma$ is the $d-1$ Hausdorff measure.
\end{itemize}
\end{defin}

The geometric hypothesis for second order continuity is
\customlabel{2nd-order-cont}{Cont-2}
\begin{defin}[\ref{2nd-order-cont}]
We say that hypothesis $\eqref{2nd-order-cont}$ holds iff there exists $C>0$ such that for all $i,j$ if $\sigma$ is the $d-1$ Hausdorff measure, then
\begin{equation}
\sigma(e^{ij}\cap \Omega) \le C.
\label{2nd-order-conta}
\tag{\ref{2nd-order-cont}}
\end{equation}
\end{defin}

 \subsection{Main result}
 \label{sec::MainResult}
Directional derivative of $\tilde g$ can  be obtained using very mild assumptions on the cost function $c$ and the approximated measure $\nu$.

\begin{proposition}
\label{prop::differentiability}
Set $\Omega$ a bounded Lipschitz convex set. 
Suppose that for all $i$ and for $\nu$-almost every $x$, the function $z\mapsto c(z^i,x)$ is differentiable around $z_0$ and that there exists $h \in L^1(\Omega,\nu)$ with $|\nabla_zc(z^i,x)|\le h(x)$ $\nu$-a.e. for all $z$ around $z_0$. Then $\tilde g$ is directionally derivable with derivative given by :
\begin{eqnarray*}
&&\lim_{t\rightarrow 0^+}\frac{\tilde g((\phi+td_\phi,z_0+td_z))-\tilde g(\phi,z_0)}{t}\\
&&=\sum_{A \subset \integset{1}{n} } \int\displaylimits_{\mathcal M^{-1}(A)} \min_{i\in A}\left( \langle \nabla_{z}c(z_0^i,x),d_z^i
\rangle-d_\phi^i \right)d\nu(x)
\end{eqnarray*}

    If $\nu(\mathcal M^{-1}(A))=0$ for each $A$ of cardinal $\ge 2$, it holds that $\tilde g$ is differentiable and
\[\nabla_z \tilde g(z_0,\phi)=\sum_{i=1}^{n} \int_{\mathcal L_i(z_0,\phi)} \nabla_{z}c(z_0^i,x) d\nu(x)\quad \text{ and } \quad \frac{\partial \tilde g}{\partial \phi_i}(z_0,\phi)= - \nu(\mathcal L_i(z_0,\phi)) \]
If in addition for $\nu$-almost every $x$, the function $z\mapsto \nabla_{z}c(z^i,x)$  is continuous around $z_0$ for all $i$, then  $\tilde g$ is $C^1$ around $(\phi,z_0)$.
\end{proposition}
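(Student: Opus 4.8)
The plan is to differentiate under the integral sign, exploiting that for each fixed $x$ the integrand $\psi^\star(x)=\min_i\left(c(z^i,x)-\phi^i\right)$ is a minimum of finitely many functions that are smooth in the parameters. Fix $x$ and set $f_i(t)=c(z_0^i+td_z^i,x)-\phi^i-td_\phi^i$, so that the integrand evaluated along the ray $(\phi+td_\phi,z_0+td_z)$ is $\min_i f_i(t)$. A minimum of finitely many functions that are right-differentiable at $t=0$ is itself right-differentiable, its right derivative being the minimum of the individual derivatives taken over the active set, which here is exactly $\mathcal M(x)$. This gives, for $\nu$-almost every $x$ (those at which each $z\mapsto c(z^i,x)$ is differentiable), the pointwise limit
\[
\lim_{t\to 0^+}\frac{\min_i f_i(t)-\min_i f_i(0)}{t}=\min_{i\in\mathcal M(x)}\left(\langle\nabla_z c(z_0^i,x),d_z^i\rangle-d_\phi^i\right).
\]

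Next I would justify exchanging the limit and the integral by dominated convergence. Since $t\mapsto\min_i f_i(t)$ is $1$-Lipschitz as a function of the vector $(f_i(t))_i$ in the sup-norm, the difference quotient is controlled by $\max_i |f_i(t)-f_i(0)|/t$; the mean value inequality together with the hypothesis $|\nabla_z c(z^i,x)|\le h(x)$ then yields the $t$-uniform bound $|d_z^i|\,h(x)+|d_\phi^i|$, which lies in $L^1(\Omega,\nu)$ because $h\in L^1$ and $\nu$ is finite. Dominated convergence identifies the directional derivative with the integral of the pointwise limit, and partitioning $\Omega$ into the level sets $\mathcal M^{-1}(A)$, on which $\mathcal M(x)=A$, produces the claimed sum over $A\subset\integset{1}{n}$.

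For the differentiability statement I observe that if $\nu(\mathcal M^{-1}(A))=0$ for every $A$ with $|A|\ge 2$, then only the singleton terms $A=\{i\}$ survive in the sum; since $\mathcal M^{-1}(\{i\})$ agrees with $\mathcal L_i$ up to a $\nu$-null set and the minimum over a singleton is trivial, the directional derivative reduces to $\sum_i\int_{\mathcal L_i}\left(\langle\nabla_z c(z_0^i,x),d_z^i\rangle-d_\phi^i\right)d\nu$. This expression is linear in the direction $(d_z,d_\phi)$; a one-sided directional derivative that is linear in the direction forces the opposite-direction derivative to be its negative, hence the two-sided Gâteaux derivative exists, and the stated formulas for $\nabla_z\tilde g$ and $\partial\tilde g/\partial\phi_i$ are read off coordinatewise.

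For the final $C^1$ conclusion I would show that the candidate gradient maps $(z,\phi)\mapsto\int_{\mathcal L_i(z,\phi)}\nabla_z c(z^i,x)\,d\nu$ and $(z,\phi)\mapsto\nu(\mathcal L_i(z,\phi))$ are continuous. The key point is that for $\nu$-almost every $x$ the minimizer is unique and strict, so by continuity of $c$ the indicator $\carac{\mathcal L_i(z,\phi)}(x)$ is locally constant in the parameters; combined with the added continuity of $z\mapsto\nabla_z c(z^i,x)$ and the domination by $h$, dominated convergence gives continuity of both maps, so the Gâteaux derivative is continuous and $\tilde g$ is $C^1$. I expect the main obstacle to be purely the measure-theoretic bookkeeping in these two dominated-convergence arguments, namely the uniform-in-$t$ control of the difference quotient and the $\nu$-almost-everywhere convergence of the Laguerre-cell indicators, since the pointwise Danskin-type computation and the linearity argument are essentially formal.
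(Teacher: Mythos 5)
Your proposal is correct and follows essentially the same route as the paper: a pointwise Danskin-type computation for $\min_i\psi^i$ giving the derivative $\min_{i\in\mathcal M(x)}$ over the active set, dominated convergence (with the difference quotient dominated via $h$) to pass to the integral, linearity in the direction when only singleton sets $\mathcal M^{-1}(\{i\})$ carry $\nu$-mass, and a final dominated-convergence argument on the Laguerre-cell indicators (the paper's Lemma~\ref{lemma::continuity}) for the $C^1$ claim. Your write-up is in fact slightly more explicit than the paper's on the uniform-in-$t$ domination of the difference quotient.
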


The theorem ensuring second order differentiability is: 
\begin{theorem}
\label{prop::2differentiability}
If the hypotheses of Proposition~\ref{prop::differentiability} and $\eqref{2nd-order-diff}$ hold, and if $\nu$ admits a density with respect to the Lebesgue measure which is $W^{1,1}(\Omega)\cap L^\infty(\Omega)$ then $\tilde g$ is twice differentiable and the formula for the Hessian is given by :
\begin{eqnarray*}
\frac{\partial^2 \tilde g}{\partial a\partial b}&=& \!\!\!\!\!\!\! \sum_{i\ne j \in \integset{1}{n}} \int_{\partial {\mathcal L}_i\cap \partial {\mathcal L}_j} \!\!\!\!\!\!\!\! \frac{(\partial_a c(z^j,x)-\partial_a\phi^j-\partial_a c(z^i,x)+\partial_a \phi^i)(\partial_b c(z^i,x)-\partial_b\phi^i)}{\Vert \nabla_x c(z^i,x)-\nabla_x c(z^j,x)\Vert}m(x)d\sigma \\
&+&\sum_i \int_{{\mathcal L}_i} \frac{\partial^2 c}{\partial a\partial b}(z^i,x)d\nu(x)
\end{eqnarray*} 
where $\sigma$ is the $d-1$ Hausdorff measure, $m$ is the density of $\nu$ and $a$ or $b$ have to be replaced by $\phi$ or $z$. If in addition $\eqref{2nd-order-cont}$ holds and if the density $m$ is $C^{0}(\Omega)$ then $\tilde g$ is $C^{2}$.
\end{theorem}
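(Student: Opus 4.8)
The plan is to obtain the Hessian by differentiating once more the first-order formulas furnished by Proposition~\ref{prop::differentiability}. Under $\eqref{2nd-order-diffa}$ each interface $e^{ij}$ is a hypersurface, hence Lebesgue-null, so that $\nu(\mathcal M^{-1}(A))=0$ for every $A$ with $\#A\ge 2$; the differentiable case of Proposition~\ref{prop::differentiability} therefore applies and supplies the clean first derivatives, which I would write in the unified form
\[
\frac{\partial \tilde g}{\partial b}(z,\phi)=\sum_{i=1}^n \int_{\mathcal L_i(z,\phi)}\bigl(\partial_b c(z^i,x)-\partial_b\phi^i\bigr)\,d\nu(x),
\]
where $\partial_b\phi^i$ is $0$ or a Kronecker symbol according to whether $b$ is a component of $z$ or of $\phi$. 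Forming the difference quotient along $(d_z,d_\phi)$, I would split it into a \emph{volume} part, from the variation of the integrand at fixed cell, and a \emph{boundary} part, from the variation of the cells $\mathcal L_i$. The volume part converges by dominated convergence, using the $W^{2,\infty}$ regularity of $c$ from $\eqref{2nd-order-diff}$ and the $L^\infty$ bound on the density, to $\sum_i\int_{\mathcal L_i}\partial_a\partial_b c(z^i,x)\,d\nu(x)$ (the term $\partial_a\partial_b\phi^i$ vanishing), which is exactly the second term of the claimed Hessian. Everything then reduces to identifying the limit of the boundary part.

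The central object for the boundary part is the symmetric difference $\mathcal L_i(z+td_z,\phi+td_\phi)\,\triangle\,\mathcal L_i(z,\phi)$. Writing $F_{ij}(x)=c(z^i,x)-\phi^i-c(z^j,x)+\phi^j$, the cell $\mathcal L_i$ is the intersection of the sublevel sets $\{F_{ij}\le 0\}$ and $e^{ij}=\{F_{ij}=0\}$. Hypothesis $\eqref{2nd-order-diffa}$ guarantees $\Vert\nabla_x F_{ij}\Vert\ge\epsi$ on $e^{ij}$, so $e^{ij}$ is a Lipschitz hypersurface and the implicit normal velocity
\[
V_{ij}=\frac{\partial_a c(z^j,x)-\partial_a\phi^j-\partial_a c(z^i,x)+\partial_a\phi^i}{\Vert\nabla_x c(z^i,x)-\nabla_x c(z^j,x)\Vert}
\]
is well defined and bounded. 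To leading order in $t$, the portion of the symmetric difference produced across $\partial\mathcal L_i\cap\partial\mathcal L_j$ is a tubular shell of signed thickness $\sim t\,V_{ij}$; the co-area formula turns the shell integral of $(\partial_b c(z^i,x)-\partial_b\phi^i)\,m(x)$ into $t$ times a surface integral over $\partial\mathcal L_i\cap\partial\mathcal L_j$, the factor $m(x)$ being the trace of the density, which exists because $m\in W^{1,1}(\Omega)$. Summing over the \emph{ordered} pairs $i\ne j$ (each interface thus being counted from both adjacent cells) reconstructs exactly the boundary term of the Hessian.

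The main obstacle is the overlap of these shells near the triple junctions, i.e.\ near points lying on several $e^{ij}$ at once, where the ``one competitor at a time'' decomposition of the symmetric difference breaks down and the co-area change of variables degenerates. This is precisely what $\eqref{2nd-order-diffb}$ and $\eqref{2nd-order-contb}$ neutralise: the bound $|\mathcal N_{ik}(\varepsilon)\cap\mathcal N_{ij}(\varepsilon')|\le C\varepsilon\varepsilon'$ shows the overlapping part of the shells has volume $O(t^2)$ and so contributes nothing after division by $t$, legitimising the disjoint decomposition of the symmetric difference into one shell per competitor; and $\sigma(e^{ik}\cap\mathcal N_{ij}(\varepsilon))\to0$ shows that the surface measure of the pieces of $e^{ik}$ approaching another interface vanishes, so the limiting surface integrals are well defined and the shells may be treated independently. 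With these two estimates the difference quotient of $\partial_b\tilde g$ converges to the announced expression, proving twice differentiability.

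Finally, for the $C^2$ statement I would prove continuity of the Hessian in $(z,\phi)$. The volume term is continuous by dominated convergence. For the boundary term, continuity of $m$ gives a continuous trace on each $e^{ij}$, the interfaces $\partial\mathcal L_i\cap\partial\mathcal L_j$ vary continuously thanks again to $\eqref{2nd-order-diffa}$, and the uniform bound $\sigma(e^{ij}\cap\Omega)\le C$ from $\eqref{2nd-order-cont}$ provides the integrable dominating surface measure needed to pass to the limit in the surface integrals, yielding continuity of the full Hessian.
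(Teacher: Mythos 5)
Your proposal is correct in its overall architecture, which matches the paper's: both differentiate the first-order formula of Proposition~\ref{prop::differentiability}, split the difference quotient into a volume contribution (handled by dominated convergence, giving the $\partial_a\partial_b c$ term) and an interface contribution, and use \eqref{2nd-order-diffb} and \eqref{2nd-order-contb} to neutralise the corners. Where you genuinely diverge is in the technical core. The paper does not decompose the symmetric difference $\mathcal L_i(t)\triangle\mathcal L_i(0)$ into tubular shells and apply the co-area formula; instead it builds an explicit flow $\theta_t$ (the normal flows $V^k_t$ of each interface, glued together by a cut-off $\zeta(\delta_k(x)/s)$ vanishing near the corners), proves in Proposition~\ref{prop:main} that $\theta_t(\mathcal L_i(0))\triangle\mathcal L_i(t)$ has volume $O(st)$, and then obtains the interface term by change of variables, an $L^1$ Taylor expansion of $f\circ\theta_t$, and Stokes' formula, before letting $s\to 0$ (Lemma~\ref{lemma:deriv-integral}). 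The two routes use the hypotheses in identical roles, but the flow buys something your sketch leaves implicit: for $m$ merely in $W^{1,1}\cap L^\infty$, the convergence of $t^{-1}\int_{\mathrm{shell}}(\partial_b c-\partial_b\phi^i)\,m\,dx$ to the surface integral against the trace of $m$ is precisely the step that needs an argument (a collar-average-to-trace statement), and the pullback by $\theta_t$ reduces it to a dominated-convergence computation on the fixed boundary $\partial\mathcal L_i(0)$. Likewise, for the $C^2$ claim the paper's Lemma~\ref{lemma:continuity-integral} again exploits the flow, partitioning $\mathcal L_i(t)\cap\mathcal L_k(t)$ into a part Lipschitz-diffeomorphic to its $t=0$ counterpart and a remainder $B_t$ whose $\sigma$-measure vanishes by \eqref{2nd-order-contb}; note that your continuity argument invokes only \eqref{2nd-order-conta} and \eqref{2nd-order-diffa}, whereas \eqref{2nd-order-contb} is also needed there to control the corner pieces of the interfaces. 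Neither point is a fatal gap --- the shell/co-area route can be made rigorous --- but they are where the real work of the paper's proof is concentrated, and your write-up asserts them rather than proves them.
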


\subsection{The Euclidean case}

\label{sec::euclideancase}
This section deals specially with the Euclidean case $c(z^i,x) = \frac{1}{2}\| z^i - x\|_2^2$. Then we have
\begin{proposition}\label{eq:derivatives}
If $c(y,x)=\frac{1}{2}\Vert y-x\Vert_2^2$, all the hypotheses are verified if $\nu$ admits a $C^{0}(\Omega)\cap W^{1,1}(\Omega)$ density with respect to the Lebesgue measure, and $z^i\ne z^j$ for $i\ne j$ and there is no Laguerre cell of zero Lebesgue volume. In this case the first order formulas are given by  :
\[\frac{\partial \tilde g}{\partial z_i}=\int_{\mathcal L_i}\left(z^i-x\right) d\nu(x)  \quad \text{ and } \quad \frac{\partial \tilde g}{\partial \phi_i}= - \nu(\mathcal L_i) \]
and the second order formula are given by :
\begin{eqnarray*}
 \frac{\partial^2 \tilde g}{\partial \phi^i\partial \phi^j}&=& \int_{\partial {\mathcal L}_i\cap \partial {\mathcal L}_j} 
\frac{m(x)}{\Vert z^i-z^j\Vert}d\sigma \quad \text{ if } i \ne j\\
 \frac{\partial^2 \tilde g}{(\partial \phi^i)^2}&=& -\sum_{j \ne i} \frac{\partial^2 \tilde g}{\partial \phi^i\partial \phi^j} \\
 \frac{\partial^2 \tilde g}{\partial \phi^i\partial z^j}&=& - \int_{\partial {\mathcal L}_i\cap \partial {\mathcal L}_j} 
\frac{(z^j-x)m(x)}{\Vert z^i-z^j\Vert}d\sigma \quad \text{ if } i \ne j\\
 \frac{\partial^2 \tilde g}{\partial \phi^i\partial z^i}&=& -\sum_{j \ne i} \frac{\partial^2 \tilde g}{\partial \phi^j\partial z^i}\\ 
 \frac{\partial^2 \tilde g}{\partial z^i\partial z^j}&=& \int_{\partial {\mathcal L}_i\cap \partial {\mathcal L}_j} 
\frac{(z^j-x)(z^i-x)^Tm(x)}{\Vert z^i-z^j\Vert}d\sigma \quad \text{ if } i \ne j\\
 \frac{\partial^2 \tilde g}{(\partial z^i)^2}&=& \nu(\mathcal L_i)-\sum_{j \ne i} \int_{\partial {\mathcal L}_i\cap \partial {\mathcal L}_j} 
\frac{(z^i-x)(z^i-x)^Tm(x)}{\Vert z^i-z^j\Vert}d\sigma \\
\end{eqnarray*}

\end{proposition}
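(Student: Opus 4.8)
The plan is to obtain the statement as the specialisation of Proposition~\ref{prop::differentiability} and Theorem~\ref{prop::2differentiability} to the quadratic cost: I would first check that the stated conditions on $\nu$ and on $(z^i)_i$ force every hypothesis used there, and then read off the displayed formulas by substituting $c(y,x)=\tfrac12\|y-x\|_2^2$ into the abstract Hessian.

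For the verification I would lean on the identities $\nabla_z c(z^i,x)=z^i-x$, $\nabla_x c(z^i,x)=x-z^i$ and, above all, $\nabla_x c(z^i,x)-\nabla_x c(z^k,x)=z^k-z^i$. Since $c$ is polynomial it lies in $W^{2,\infty}(B(z_0,r)\times\Omega)$, and since $\Omega$ is bounded the domination hypothesis of Proposition~\ref{prop::differentiability} holds with a constant $h$. Because $z^i\ne z^k$, each $e^{ik}$ is a genuine affine hyperplane with normal $z^k-z^i$; this gives \eqref{2nd-order-cont} (a bounded slice of a hyperplane has finite $(d-1)$-measure), it makes the constant gradient gap equal to $\|z^k-z^i\|$ so that \eqref{2nd-order-diffa} holds with $\epsi=\min_{i\ne k}\|z^i-z^k\|>0$, and it shows each $e^{ik}$ --- hence each $\mathcal M^{-1}(A)$ with $\#A\ge2$ --- to be Lebesgue negligible, so that the absolute continuity of $\nu$ yields $\nu(\mathcal M^{-1}(A))=0$ and upgrades the directional derivative to a gradient.

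The crux is the geometric hypothesis~\eqref{2nd-order-diff}. Each $\mathcal N_{ik}(\varepsilon)$ is the $\varepsilon$-slab around the hyperplane $e^{ik}$ (around $\partial\Omega$ when $k=0$). For two transversal hyperplanes the intersection of an $\varepsilon$- and an $\varepsilon'$-slab inside the bounded $\Omega$ has volume $\le C\varepsilon\varepsilon'$, the constant being governed by the sine of the angle between $z^k-z^i$ and $z^j-z^i$; this is \eqref{2nd-order-diffb}. For \eqref{2nd-order-contb} I would note that $e^{ik}\cap\mathcal N_{ij}(\varepsilon)$ decreases as $\varepsilon\to0$ to $e^{ik}\cap e^{ij}$, an affine subspace of dimension $\le d-2$, so its $(d-1)$-measure tends to $0$ by monotone convergence. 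The single configuration that would wreck both bounds is a coincidence $e^{ij}=e^{ik}$ with $j\ne k$, and this is exactly what the assumption that no Laguerre cell has zero volume forbids: after the lift $\psi^i(x)=\tfrac12\|x\|^2-\langle z^i,x\rangle+b^i$ the cells are the domains of minimality of the affine maps $x\mapsto-\langle z^i,x\rangle+b^i$, and $e^{ij}=e^{ik}$ forces $z^i,z^j,z^k$ collinear with the three corresponding affine functions concurrent along the common normal direction; the lower envelope of three concurrent lines omits the middle-slope one, so the cell of the geometrically middle point would be contained in a hyperplane and have zero volume, a contradiction. Thus $e^{ij}\ne e^{ik}$ always; parallel-but-distinct pairs give slabs that are disjoint for small $\varepsilon$, and the remaining pairs are transversal. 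I expect the remaining delicate point to be the boundary cases $j=0$ or $k=0$, which I would treat separately using convexity and the Lipschitz regularity of $\Omega$: the boundary collar has volume $O(\varepsilon)$ and meets a transversal slab in volume $O(\varepsilon\varepsilon')$, while $e^{ik}\cap\partial\Omega$ is $(d-2)$-dimensional by convexity.

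Finally I would specialise the formula of Theorem~\ref{prop::2differentiability}. On each facet $\partial\mathcal L_i\cap\partial\mathcal L_j$ the denominator $\|\nabla_x c(z^i,x)-\nabla_x c(z^j,x)\|$ is the constant $\|z^i-z^j\|$. Taking the generic directions $a,b$ among $\phi^p$ and $z^p$ and inserting $\partial_{\phi^p}c=0$, $\partial_{\phi^p}\phi^q=\delta_{pq}$, $\partial_{z^p}c(z^i,x)=\delta_{ip}(z^i-x)$ and $\partial^2_{z^pz^q}c(z^i,x)=\delta_{ip}\delta_{iq}I$, the Kronecker deltas collapse the double sum over cell pairs to the single surviving facet dictated by $a,b$, and the bulk term $\sum_i\int_{\mathcal L_i}\partial^2_{ab}c\,d\nu$ contributes $\nu(\mathcal L_i)\,I$ only to $\partial^2\tilde g/(\partial z^i)^2$; this reproduces each displayed block. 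The diagonal relations in $\phi$ and the mixed $\phi z$ block are the row-sum identities $\sum_{j}\partial^2\tilde g/\partial\phi^i\partial\phi^j=0$ and $\sum_{j}\partial^2\tilde g/\partial\phi^j\partial z^i=0$, which come from the invariance $\tilde g(\phi+t\mathbf 1,z)=\tilde g(\phi,z)-t$ and are equally visible in the same delta bookkeeping. The $C^2$ conclusion then follows from the last sentence of Theorem~\ref{prop::2differentiability}, since $m\in C^0(\Omega)$ and \eqref{2nd-order-cont} both hold here.
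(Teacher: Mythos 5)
Your proposal is correct and follows essentially the same route as the paper: check each hypothesis of Proposition~\ref{prop::differentiability} and Theorem~\ref{prop::2differentiability} using the facts that the $e^{ik}$ are affine hyperplanes with normal $z^k-z^i$, that a coincidence $e^{ij}=e^{ik}$ would force three collinear points whose middle Laguerre cell is degenerate, and that a hyperplane meeting $\partial\Omega$ in a large set would be supporting and empty out a cell; then substitute the quadratic cost into the abstract Hessian. Your extra remarks (the row-sum identities via the shift invariance $\tilde g(\phi+t\mathbf 1,z)=\tilde g(\phi,z)-t$, and the explicit Kronecker-delta bookkeeping) fill in the specialisation step that the paper leaves implicit, but do not change the argument.
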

\begin{proof}

In the Euclidean case the regularity assumption on $c$ is trivially satisfied. Moreover $e^{ik}$ is an hyperplane and $\Omega$ is bounded so that~\eqref{2nd-order-conta} is trivial.

A direct computation shows that
\[\Vert \nabla_x c(z^i,x)-\nabla_x c(z^j,x)\Vert  = \Vert z^i-z^j\Vert,\]
which is non zero by hypothesis and hence uniformly non-zero, so that Hypothesis \eqref{2nd-order-diffa} is satisfied.

For $j,k \ne 0$, the sets $\mathcal N_{ik}(\varepsilon)$ are $\varepsilon$-neighbourhoods of the hyperplane $e^{ik}$, so that~\eqref{2nd-order-diffb} and~\eqref{2nd-order-contb} are verified as soon as the hyperplanes $e^{ik}$ and $e^{ij}$ are different for $j\ne k$. On the other hand, it is impossible that $e^{ik} = e^{ij}$ for any $ j\ne k$. Indeed, by the definition~\eqref{eq::defin:eik}, it would mean that $z^i$,$z^k$ and $z^l$ are aligned and that the Laguerre cell corresponding to the point between the two others has empty interior, contradicting the hypotheses of the theorem.

Similarly, if $e^{ik}\cap \partial \Omega$ is not reduced to at most two points, by the convexity of $\Omega$, the set $\Omega$ lies on one side of $e^{ik}$ and one of the two Laguerre cells $\mathcal L_i$ or $\mathcal L_k$ is therefore empty. This final argument proves the case~\eqref{2nd-order-diffb} and ~\eqref{2nd-order-contb} for $j=0$. 

Now let $A$ be of cardinal $\ge 2$. Let $i$ and $k$ belong to $A$, then $\mathcal M^{-1}(A)$ is included in $e^{ik}$, but $e^{ik}$ is an hyperplane which is of zero Lebesgue measure, hence $\nu(\mathcal M^{-1}(A)) = 0$.

The rest of the hypotheses of Proposition~\ref{prop::differentiability} is trivial. 
\end{proof}

As proved in \cite{kitagawa2016newton}, the constant $C$ appearing in \eqref{2nd-order-diffb} depends on the minimal angle between the intersection of two competition zones $e^{ik}$ and $e^{il}$. This constant is non-zero since there is a finite number of such intersections and it drives the $C^{2,\alpha}$ regularity of the function $g$.
  
\subsection{Technical lemmata}
This section is devoted to proving two technical lemmata, the first one ensures second-order differentiability of the function $\tilde g$ and the second one ensures continuity. 
In this section, fix $i$, fix a $C^\infty$ mapping $t\mapsto (z(t),\phi(t))$ that we aim at deriving at time $t=0$ . Set $s$ small enough and consider only $t\in [0,s]$. Throughout this section the objects that depend on $(z,\phi)$ (say of the Laguerre cell $\mathcal L_j(z,\phi)$) will be written as depending on $t$ (with the obvious notation $\mathcal L_j(t)$). Denote
\[u^{ik}_t(x):=c(z^i(t),x)-\phi^i(t)-\left(c(z^k(t),x)-\phi^k(t)\right).\]
Denote $(u^{ik}_t)^{-1}(0):=\{x\text{ s.t. } u^{ik}_t(x)=0\}$. Note that $(u^{ik}_0)^{-1}(0)=e^{ik}$, where $e^{ik}$ is defined in~\eqref{eq::defin:eik}, Section~\ref{sec::hyp}.
\begin{lemma}
\label{lemma:deriv-integral}
Suppose that the Laguerre tessellation verifies~\eqref{2nd-order-diff}. Let 
\[\xi:t\mapsto \int_{\mathcal L_i(t)}f(x,t)dx,\] with $f$ in $W^{1,1} \cap L^\infty$ then $\xi$ is derivable at time $t=0$ with :
\[\partial_t \xi(0)=\sum_k \int_{{\mathcal L}_i(0)\cap {\mathcal L}_k(0)} \frac{\partial_t u^{ik}_0(x)}{\Vert \partial_x u^{ik}_0(x)\Vert}f(x,0)d\sigma+\int_{\mathcal L_i(0)}\partial_tf(x,0)dx,\]
where $\sigma$ is the $d-1$ Hausdorff measure on ${\mathcal L}_i(0)\cap {\mathcal L}_k(0)$.
\end{lemma}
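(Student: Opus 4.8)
The plan is to treat the difference quotient as a sum of a ``fixed–domain'' and a ``fixed–integrand'' contribution,
\[
\frac{\xi(t)-\xi(0)}{t} = \frac{1}{t}\int_{\mathcal L_i(t)} \bigl(f(x,t)-f(x,0)\bigr)\,dx + \frac{1}{t}\Bigl(\int_{\mathcal L_i(t)} f(x,0)\,dx - \int_{\mathcal L_i(0)} f(x,0)\,dx\Bigr).
\]
The first term is the easy one: the difference quotient of the integrand converges to $\partial_t f(\cdot,0)$ with an integrable dominating bound, and since $\mathcal L_i(t)\subset\Omega$ is bounded with $|\mathcal L_i(t)\,\triangle\,\mathcal L_i(0)|\to 0$, dominated convergence yields the volume contribution $\int_{\mathcal L_i(0)}\partial_t f(x,0)\,dx$. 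All the difficulty concentrates in the second term, which records the effect of the moving boundary.

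For that moving-domain term I would note that $\mathcal L_i(t)\,\triangle\,\mathcal L_i(0)$ lies in a union of thin slabs, one along each face $\mathcal L_i(0)\cap\mathcal L_k(0)\subset e^{ik}$; the faces sitting on $\partial\Omega$ do not move (since $\Omega$ is fixed) and contribute nothing, which is why the sum runs over the interior neighbours $k$. Working locally near one face, where only the constraint $u^{ik}_t\le 0$ is active, the cell gains or loses exactly the region between the level sets $\{u^{ik}_0=0\}$ and $\{u^{ik}_t=0\}$. Hypothesis \eqref{2nd-order-diffa} gives $\Vert\nabla_x u^{ik}_0\Vert\ge\varepsilon$ on $e^{ik}$, so these are genuine Lipschitz hypersurfaces and the implicit function theorem applies: the boundary moves with normal velocity $-\,\partial_t u^{ik}_0/\Vert\nabla_x u^{ik}_0\Vert$, and the slab thickness at $x\in e^{ik}$ is of order $t\,\partial_t u^{ik}_0(x)/\Vert\nabla_x u^{ik}_0(x)\Vert$. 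Rewriting the slab volume integral of $f(\cdot,0)$ by the co-area formula, dividing by $t$, and tracking the orientation of $\mathcal L_i$ (the side $u^{ik}\le 0$) produces in the limit the stated face term $\int_{\mathcal L_i(0)\cap\mathcal L_k(0)}\frac{\partial_t u^{ik}_0}{\Vert\nabla_x u^{ik}_0\Vert}f(x,0)\,d\sigma$.

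The slabs overlap where a face $e^{ik}$ meets another face $e^{ij}$, and there the single-face picture breaks down; these codimension-two corners must be shown to be negligible. Taking slab thickness of order $t$, i.e.\ $\varepsilon,\varepsilon'$ of order $t$ in \eqref{2nd-order-diffb}, the overlap $\mathcal N_{ik}(\varepsilon)\cap\mathcal N_{ij}(\varepsilon')$ has Lebesgue measure $O(t^2)$, so, $f$ being bounded, its contribution to the difference quotient is $O(t)\to 0$. This is precisely the role of \eqref{2nd-order-diffb}. Symmetrically, \eqref{2nd-order-contb} guarantees that the surface measure of the part of each face lying within $\varepsilon$ of a neighbouring face vanishes as $\varepsilon\to 0$, so the limiting integrals over the genuine (corner-free) parts of the faces recover the full boundary term.

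The remaining obstacle, and the technical heart of the argument, is the low regularity of $f$: being only $W^{1,1}\cap L^\infty$, the slab-to-surface passage cannot be justified pointwise. I would first prove the formula for smooth $f$ and then remove the regularity by mollification $f_\delta\to f$. The $L^\infty$ bound controls all integrals over the $O(t)$-thin slabs uniformly in $\delta$, while the $W^{1,1}$ regularity provides, via the trace theorem and the co-area formula, a well-defined $L^1$ trace of $f$ on each hypersurface $e^{ik}\cap\Omega$ together with convergence $f_\delta\to f$ of these traces. The delicate point is the interchange of the limits $t\to 0$ and $\delta\to 0$: one must quantify the co-area approximation of the thin-slab integrals for merely $W^{1,1}$ integrands, uniformly enough that it survives the geometric degeneration of the slabs near the corners bounded by \eqref{2nd-order-diffb} and \eqref{2nd-order-contb}. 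Establishing that uniform estimate is where I expect the main work to lie.
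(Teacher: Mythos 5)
Your decomposition into a fixed--domain term and a moving--domain term, with the moving part analysed face by face through thin slabs and the co-area formula, is a genuinely different route from the paper's. The paper never works with slabs directly: it builds a Lipschitz flow $\theta_t$ from the normal velocities $V^k_t=-\partial_t u^{ik}_t\,\partial_x u^{ik}_t/\Vert\partial_x u^{ik}_t\Vert^2$, glued by a cutoff $\zeta(\delta_k(x)/s)$ away from the corners, proves (Proposition~\ref{prop:main}, using \eqref{2nd-order-diffb}) that $\theta_t(\mathcal L_i(0))\,\Delta\,\mathcal L_i(t)$ has volume $O(st)$, and then changes variables so that the whole difference quotient becomes a derivative of $t\mapsto\int_{\mathcal L_i(0)}f(\theta_t(x),t)\lvert\det J\theta_t\rvert\,dx$ over a \emph{fixed} domain. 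The boundary term then falls out of Stokes' formula applied to $\operatorname{div}(fV_0)$, and the only limit of surface integrals needed is the $s\to0$ removal of the cutoff, handled by \eqref{2nd-order-contb}, the $W^{1,1}$ trace theorem and dominated convergence.

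This difference matters because the step you yourself flag as unresolved --- passing from $\frac1t\int_{S_t}f(x,0)\,dx$ over a slab of thickness $\sim t\,\partial_t u^{ik}_0/\Vert\nabla_x u^{ik}_0\Vert$ to the trace integral $\int_{e^{ik}}f\,\partial_t u^{ik}_0/\Vert\nabla_x u^{ik}_0\Vert\,d\sigma$ for $f$ merely $W^{1,1}\cap L^\infty$, uniformly enough to commute with the mollification parameter and survive the corner degeneration --- is precisely the analytic heart of the lemma, and your proposal does not supply it. It is not a routine technicality: after the co-area formula you need the map $s\mapsto\int_{\{u^{ik}_0=s\}}(f/\Vert\nabla u^{ik}_0\Vert)\,d\sigma$ to be continuous (at least in the Lebesgue-point sense) at $s=0$, with a variable upper limit $t\,\partial_t u^{ik}_0(x)+o(t)$ depending on the point of the face, and the $o(t)$ error in the slab's description must itself be controlled in a way compatible with only an $L^1$ trace. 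The paper's flow construction is exactly the device that sidesteps this: the $W^{1,1}$ regularity is consumed once, in the $L^1$-differentiability of $t\mapsto f\circ\theta_t$ (the estimate quoted from \cite{henrot2006variation}), and no limit of integrals over nearby level sets ever has to be taken. So either import that transport/change-of-variables argument, or actually prove the uniform slab-to-trace estimate; as written, the proof is incomplete at its central point. (A smaller issue: you should also make explicit that the expansion $u^{ik}_t=u^{ik}_0+t\,\partial_t u^{ik}_0+o(t)$ is uniform on $\mathcal N_{ik}(s)$, which is where the $W^{2,\infty}$ part of \eqref{2nd-order-diff} enters; your sketch uses \eqref{2nd-order-diffa} and \eqref{2nd-order-diffb} but never says where the $C^{1,1}$-in-$t$ control of the level sets comes from.)
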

\begin{lemma}
\label{lemma:continuity-integral}
Suppose the Laguerre tesselation verifies~\eqref{2nd-order-cont}. Let  $f$ be continuous, then $\xi:t\mapsto \int_{\mathcal L_i(t)\cap\mathcal L_k(t)}f(x,t)d\sigma$ is continuous. 
\end{lemma}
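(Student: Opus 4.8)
The plan is to prove Lemma~\ref{lemma:continuity-integral}, which asserts that the boundary integral $\xi(t) = \int_{\mathcal{L}_i(t) \cap \mathcal{L}_k(t)} f(x,t)\, d\sigma$ is continuous at $t=0$ under hypothesis \eqref{2nd-order-cont}, with $f$ continuous. The central difficulty is that the domain of integration $\mathcal{L}_i(t) \cap \mathcal{L}_k(t)$ is itself a $t$-dependent $(d-1)$-dimensional surface, so the integrand and the domain move simultaneously, and continuity of the measure $\sigma$ restricted to these moving surfaces is exactly what must be controlled.

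First I would parametrize the competition surface by the level set $(u^{ik}_t)^{-1}(0)$, recalling that $(u^{ik}_0)^{-1}(0) = e^{ik}$. Since the cost satisfies the $W^{2,\infty}$ regularity from \eqref{2nd-order-diff} and the nondegeneracy $\|\partial_x u^{ik}_0\| \geq \epsi$ from \eqref{2nd-order-diffa}, the implicit function theorem gives, for $t$ small, a smooth parametrization of $(u^{ik}_t)^{-1}(0)$ over a fixed reference hypersurface (locally a graph over the tangent space of $e^{ik}$), with the parametrization and its Jacobian depending continuously — indeed Lipschitz — on $t$. This reduces the surface integral to an integral over a fixed domain against a Jacobian factor $J(x,t)$ that converges uniformly to $J(x,0)$ as $t \to 0$. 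The set $\mathcal{L}_i(t) \cap \mathcal{L}_k(t)$ is the subset of $(u^{ik}_t)^{-1}(0)$ cut out by the remaining inequalities $u^{ij}_t \leq 0$ for $j \neq i,k$, so on the fixed reference domain it becomes the indicator of a $t$-dependent region.

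The key step is then to control the \emph{domain} of integration: I would write $\xi(t) - \xi(0)$ as the sum of a term where the integrand-times-Jacobian changes (handled by uniform continuity of $f$, the continuity of $J(\cdot,t)$, and dominated convergence using the uniform bound $\sigma(e^{ik}\cap\Omega)\le C$ from \eqref{2nd-order-conta}) and a term measuring the symmetric difference of the two cut-out regions. For the latter, the region boundary is governed by where some $u^{ij}_t$ changes sign along the surface; since each $u^{ij}_t$ depends continuously on $t$ and the gradient nondegeneracy prevents the level set from being tangent, the $\sigma$-measure of the symmetric difference tends to $0$ as $t \to 0$. Here hypothesis \eqref{2nd-order-contb}, $\lim_{\varepsilon \to 0} \sigma(e^{ik} \cap \mathcal{N}_{ij}(\varepsilon)) = 0$, is precisely the tool that bounds the mass living near the moving cut, since the symmetric difference is contained in a shrinking neighborhood $\mathcal{N}_{ij}(\varepsilon(t))$ with $\varepsilon(t) \to 0$.

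I expect the main obstacle to be making the domain-perturbation estimate rigorous uniformly in all the competing indices $j$ simultaneously, rather than for a single cut. One must ensure that the parametrization of $(u^{ik}_t)^{-1}(0)$ is valid on a neighborhood large enough to contain $\mathcal{L}_i(t) \cap \mathcal{L}_k(t)$ for all small $t$, and that the transported indicator functions converge in $L^1(\sigma)$; the finite global surface bound $C$ from \eqref{2nd-order-cont} is what upgrades pointwise almost-everywhere convergence of the integrand to convergence of the integral via dominated convergence. Combining the uniform Jacobian convergence, the uniform continuity of the continuous integrand $f$ on the compact set $\overline{\Omega}$, and the vanishing symmetric-difference estimate then yields $\xi(t) \to \xi(0)$, completing the proof.
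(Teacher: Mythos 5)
Your proposal is correct and follows essentially the same route as the paper: a change of variables identifying the moving competition surface with a fixed reference one (the paper uses the level-set-preserving flow $\theta^k_t$ it has already constructed, rather than an implicit-function-theorem graph, but this is the same device), the bound \eqref{2nd-order-conta} to dominate the main term, and \eqref{2nd-order-contb} to show that the part of the surface swept by the moving cuts $u^{ij}_t=0$ has vanishing $\sigma$-measure. The paper phrases the second step as a partition of $\mathcal L_i(t)\cap\mathcal L_k(t)$ into the image $A_t$ of points that never leave the cell intersection along the flow and a remainder $B_t$ contained in $\theta^k_t\bigl(e^{ik}\cap\mathcal N^\star_{ik}(2C_v s)\bigr)$, which is exactly your symmetric-difference term.
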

These lemmata are proven using tools of differential geometry via a diffeomorphism $\theta$ that maps approximatively $\mathcal L_i(0)$ to $\mathcal L_i(t)$. The organization of this section is as follows: In Section~\ref{subsec:construct-flow} the diffeormorphism is built, and it is shown that $\theta(\mathcal L_i(0))\simeq \mathcal L_i(t)$. The lemmata are then proven in Section~\ref{sec::proof:lemma}.

\subsubsection{Construction of the flow}
\label{subsec:construct-flow}

 For any $k \neq i, k> 0$, \eqref{2nd-order-diffa} ensures that $\Vert \partial_x u^{ik}_0 (x) \Vert$ is uniformly non-zero on $(u^{ik}_0)^{-1}(0)$. By Lipschitz regularity of $\partial_x u$, $\Vert \partial_x u^{ik}_t (x) \Vert$ is uniformly non-zero for all  $x \in \mathcal N_{ik}(s), t \in [0,s]$, provided $s$ is chosen small enough. Hence the vector field defined as :
\[
V^k_t(x):=-\partial_tu^{ik}_t(x) \frac{\partial_x u^{ik}_t(x)}{\Vert \partial_x u^{ik}_t(x) \Vert^2} \quad  \forall x \in \mathcal N_{ik}(s), t \in [0,s],
\]
is Lipschitz and can be extended as wanted outside $\mathcal{N}_{ik}(s)$.
%
The flow $\theta^k$ associated to $V^k_t$ is defined as :
\begin{equation}
\label{eq::defin::flow}
\theta^k_0(x)=x \text{ and }\partial_t \theta^k_t(x)=V_t^k(\theta^k_t(x)).
\end{equation}

The flow $\theta^k$ preserves the level-sets of $u^{ik}_t$ in the sense that for all $x$, the mapping $t\mapsto u^{ik}_t(\theta^k_t(x))$ is a constant as long as $\theta^k_t(x)$ remains in $\mathcal N_{ik}(s)$. Hence the flow $\theta^k_t$ preserves the competition zone between $\mathcal L_i(t)$ and $\mathcal L_k(t)$. 

The objective is to build a flow $\theta$ which preserves the whole boundary of $\mathcal L_i(t)$. To that end, introduce:
\[
\mathcal N_{ik}^\star(\varepsilon) =\bigcup_{0\le j \ne k \le n} \mathcal N_{ij}(\varepsilon),
\]
and denote $\delta_k(x)$ the distance function to $\mathcal{N}_{ik}^\star(0)$
\[
\delta_k(x)=\inf_y\left\{\Vert y-x\Vert, \text{s.t. } y\in \partial \Omega \bigcup\left( \bigcup_{1\le j\ne k\le n}e^{ij} \right)\right\}.
\]

Set $\zeta$ a non-decreasing $C^\infty(\RR^+,\RR)$ function equal to zero  on $[0,1/2]$ and to one on $[1,+\infty[$ and for all $x,0\le t \le s$ define
\[
\tilde V^k_t(x)=\zeta(\frac{\delta_k(x)}{s})V^k_t(x) \quad V_t(x)= \sum_{k=1}^n \tilde V^k_t(x)
\]

Then $V_t$ is equal to $V^k_t$ on $\mathcal N_{ik}(s)\setminus \mathcal N_{ik}^\star(s)$. One can safely interpret that $V_t=V^k_t$ on the edge of $\mathcal L_i(t)$ that is shared with $\mathcal L_k(t)$ and has been smoothed to zero on every corner of $\mathcal L_i(0)$.
\begin{figure}\label{fig::CutOff}
\centering

 \caption{Example of the vector fields $V^3_t$ and $\tilde V^3_t$ for $i = 1$}
\end{figure}

\medskip
Denote $\theta$ the flow associated to $V$. We claim that
\vspace{-0.2cm}
\begin{proposition}
\label{prop:main}
There exists $C,c>0$ such that for all $s$ small enough, for all $k$, and $0\le t\le cs$, the symmetric difference between $\theta_t(\mathcal L_i(0))$ and $\mathcal L_i(t)$ has Lebesgue volume bounded by $Cst$.
\end{proposition}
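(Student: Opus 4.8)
The plan is to compare the two sets after pulling $\mathcal{L}_i(t)$ back by the flow. Since $V_t$ is Lipschitz and bounded, $\theta_t$ is a diffeomorphism whose Jacobian tends to $1$ as $t\to 0$; hence it suffices to bound the symmetric difference of the pulled-back sets, because $\theta_t(\mathcal{L}_i(0))\triangle\mathcal{L}_i(t)=\theta_t\big(\mathcal{L}_i(0)\triangle\theta_t^{-1}(\mathcal{L}_i(t))\big)$ and $|\theta_t(E)|\le C|E|$ for $t$ small. Writing $g^k(x):=u^{ik}_t(\theta_t(x))$, I have $\mathcal{L}_i(0)=\{x\in\Omega:\ u^{ik}_0(x)\le0\ \forall k\}$ and $\theta_t^{-1}(\mathcal{L}_i(t))=\{x:\ \theta_t(x)\in\Omega,\ g^k(x)\le0\ \forall k\}$, so everything reduces to comparing the sign patterns of the families $(u^{ik}_0)_k$ and $(g^k)_k$.

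Two facts about $g^k$ drive the estimate. First, on the good part $\mathcal{N}_{ik}(s)\setminus\mathcal{N}^\star_{ik}(s)$ the field $V_t$ coincides with $V^k_t$, and $\theta^k$ preserves the level sets of $u^{ik}$, so there $g^k=u^{ik}_0$ exactly (for trajectories that remain in this set, which costs only an $O(t)=O(s)$ enlargement of the complementary bad region). Second, everywhere one has $|g^k(x)-u^{ik}_0(x)|\le Ct$: splitting $u^{ik}_t(\theta_t(x))-u^{ik}_0(x)$ into $(u^{ik}_t-u^{ik}_0)(\theta_t(x))$ and $u^{ik}_0(\theta_t(x))-u^{ik}_0(x)$, the first is $O(t)$ because $\partial_t u^{ik}$ is bounded (the path is $C^\infty$ and $c\in W^{2,\infty}$) and the second is $O(t)$ by the Lipschitz bound on $u^{ik}_0$ together with $|\theta_t(x)-x|\le Mt$. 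I also record that within distance $s/2$ of $\partial\Omega$ every $\delta_k$ is $<s/2$, so $V_t\equiv0$ there; thus $\theta_t$ fixes a neighbourhood of $\partial\Omega$ and, for $t\le cs$ with $c$ small, leaves $\Omega$ invariant, which removes any discrepancy coming from the constraint $\theta_t(x)\in\Omega$.

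Next I localise the symmetric difference. If $x$ belongs to it, some constraint changes sign between the two families; say $u^{ij}_0(x)\le0$ while $g^j(x)>0$ (the reverse case is symmetric). By the first fact $x$ must lie in the bad region of index $j$, hence in $\mathcal{N}^\star_{ij}(C_0 s)=\bigcup_{l\ne j}\mathcal{N}_{il}(C_0 s)$; by the second fact $0<g^j(x)\le u^{ij}_0(x)+Ct$ forces $u^{ij}_0(x)\in(-Ct,0]$, and since $\|\nabla_x u^{ij}_0\|\ge\epsi$ by \eqref{2nd-order-diffa} this places $x$ in the thin slab $\mathcal{N}_{ij}(C't)$ around $e^{ij}$. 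Therefore the pulled-back symmetric difference is contained in $\bigcup_{j\ne l}\mathcal{N}_{ij}(C't)\cap\mathcal{N}_{il}(C_0 s)$, a finite union indexed by the neighbours of cell $i$ (including $l=0$ or $j=0$ for the faces on $\partial\Omega$).

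The crux is the final measure bound, and this is where hypothesis \eqref{2nd-order-diffb} is indispensable. Applying it with the two independent widths $\varepsilon=C't$ and $\varepsilon'=C_0 s$ gives $|\mathcal{N}_{ij}(C't)\cap\mathcal{N}_{il}(C_0 s)|\le C\,(C't)(C_0 s)=O(st)$ for each pair, and summing over the finitely many pairs yields the claimed $Cst$ after multiplying by the bounded Jacobian. The point to stress is that the naive inclusion of the symmetric difference in the bad region alone only produces the weaker bound $O(s^2)$; it is precisely the product structure $\varepsilon\varepsilon'$ of \eqref{2nd-order-diffb}, allowing one factor to be taken at the flow scale $t$ and the other at the corner scale $s$, that upgrades this to the sharp $O(st)$. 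The restriction $t\le cs$ is exactly what makes both scales admissible simultaneously (so that $\mathcal{N}_{ij}(C't)\subset\mathcal{N}_{ij}(s)$ and the flow cannot reach $\partial\Omega$), and the main care in a full write-up is the bookkeeping of the $O(s)$ enlargements needed so that ``the trajectory stays in the good region'' is legitimate.
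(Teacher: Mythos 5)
Your overall strategy is the paper's: after reducing to a comparison of the families $(u^{ik}_0)_k$ and $(u^{ik}_t\circ\theta_t)_k$, you localize the symmetric difference inside a finite union of sets of the form $\mathcal N_{ij}(O(t))\cap\mathcal N_{il}(O(s))$ and invoke \eqref{2nd-order-diffb} with the two independent widths $\varepsilon\sim t$ and $\varepsilon'\sim s$; your remark that the naive localization only gives $O(s^2)$ and that the product structure of \eqref{2nd-order-diffb} is what upgrades this to $O(st)$ is exactly the point of the paper's argument.

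However, one step of your localization is not justified as written. You pass from $u^{ij}_0(x)\in(-Ct,0]$ to $x\in\mathcal N_{ij}(C't)$ by invoking the gradient lower bound \eqref{2nd-order-diffa}. That hypothesis controls $\Vert\nabla_x u^{ij}_0\Vert$ only on the zero set $e^{ij}$ (and, by Lipschitz continuity of $\nabla_x c$, on a fixed small neighbourhood of it); it does not prevent $|u^{ij}_0|$ from taking values of order $t$ far from $e^{ij}$, and even inside $\mathcal N_{ij}(s)$ the implication ``small value $\Rightarrow$ small distance'' needs a gradient-flow argument that must be checked not to exit the region where the lower bound holds. A related ordering problem affects the other half of your localization: ``$x$ must lie in the bad region of index $j$, hence in $\mathcal N^\star_{ij}(C_0s)$'' is too quick, because the complement of the good region $\mathcal N_{ij}(s)\setminus\mathcal N^\star_{ij}(s)$ also contains every point far from $e^{ij}$, so you cannot place $x$ in $\mathcal N^\star_{ij}(C_0s)$ before you have localized it near $e^{ij}$. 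The paper avoids both issues by arguing along the trajectory: by the intermediate value theorem there is $r_1\le t$ with $u^{ij}_{r_1}(\theta_{r_1}(x))=0$, so $\theta_{r_1}(x)$ lies on $(u^{ij}_{r_1})^{-1}(0)=\theta^j_{r_1}(e^{ij})$, which by the bounded speed of $\theta^j$ is within $C_vr_1$ of $e^{ij}$; the bounded speed of $\theta$ then puts the whole trajectory, and in particular $x$, in $\mathcal N_{ij}(2C_vt)$, and only afterwards does one argue that the trajectory must meet $\mathcal N^\star_{ij}(s)$ (otherwise $V=V^j$ along it and $u^{ij}$ would be conserved, contradicting the sign change). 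With that substitution your proof closes; the remaining ingredients (the treatment of $\partial\Omega$ via $V\equiv 0$ there, the Jacobian bound for the pullback, and the final application of \eqref{2nd-order-diffb}) are correct and match the paper.
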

{\bf Proof}
Note first that the vector field $V_t$ is always zero on $\partial \Omega$ so that $\theta_t(\Omega)=\Omega$ for all $t$. In the sequel $C_v$ denotes an upper bound of the velocity of $\theta$ and $\theta^k$. Set $c\le 1/C_v$, then for all $k$ :
\begin{equation}
\label{eq::app}
\theta^k_t(e^{ik}) \subset \mathcal{N}_{ik}(s), 
\end{equation}
and then $\theta^k_t(e^{ik}) = (u^{ik}_t)^{-1}(0)$.
Let $x\in \theta_t(\mathcal L_i(0))\Delta \mathcal L_i(t)$ and denote $x_0$ such that $x=\theta_t(x_0)$. We claim that there exists $k\in \integset{1}{n}$ and $0\le r_1,r_2\le t$ such that $u^{ik}_{r_1}(\theta_{r_1}(x_0))=0$ and $u^{ik}_{r_2}(\theta_{r_2}(x_0)) \ne 0$.

Indeed, if for instance $x\in \theta_t(\mathcal L_i(0))$ but $x\notin \mathcal L_i(t)$, then trivially $x_0 \in \mathcal L_i(0)$, meaning that for all $k$, $u^{ik}_0(x_0)\le 0$. But $x\notin \mathcal L_i(t)$ means that there exists a $k$ such that $u^{ik}_t(\theta_t(x_0))>0$. The continuity of the mapping $t\mapsto u^{ik}_t(\theta_t(x))$ ensures that for some ${r_1}$ we have $u^{ik}_{r_1}(\theta_{r_1}(x_0))=0$. The other case is done the same way.

Since $\theta$ has bounded velocity, 
\[
\forall r\le t \quad \Vert \theta_{r}(x_0)-\theta_{r_1}(x_0)\Vert \le C_v|r-r_1|.
\]

By \eqref{eq::app}, since $\theta_{r_1}(x_0) \in (u^{ik}_{r_1})^{-1}(0)$, then $\theta_{r_1}(x_0) \in \mathcal N_{ik} (C_vr_1)$, then 
\begin{equation}
\label{equa::smallneighbourhood}
\forall 0\le r\le t,\quad \theta_{r}(x_0)\in \mathcal N_{ik}(2C_vt).
\end{equation}

Upon reducing $c$ by a factor $2$, $\theta_{r}(x_0)\in \mathcal N_{ik}(s)$. We now claim that 
\begin{equation}
\label{eq::app2}
\text{there exists } 0\le r_3\le t\text{ such that }\theta_{r_3}(x_0)\in \mathcal N^\star_{ik}(s).
\end{equation}
 Indeed, if it is not the case, then for all $r$ $V_r(\theta_r(x_0))=V^k_r(\theta_r(x_0))$ and then $\theta_r(x_0)=\theta^k_r(x_0)$ and hence
 $u^{ik}_{r}(\theta_{r}(x_0))$ is a constant which is in contradiction with $u^{ik}_{r_2}(\theta_{r_2}(x_0)) \ne 0$ and $u^{ik}_{r_1}(\theta_{r_1}(x_0)) = 0$. 
 Using the bounded velocity of $\theta$, and \eqref{eq::app2} we conclude that 
 $x=\theta_t(x_0)\in \mathcal N^\star_{ik}(s+C_vt)\subset \mathcal N^\star_{ik}(2s).$ 
Finally, using  \eqref{equa::smallneighbourhood}, we obtain 
\[
x\in \mathcal N^\star_{ik}(2s)\cap \mathcal N_{ik}(C_vt) = \bigcup_{j\ne k} \left(\mathcal N_{ij}(2s)\cap \mathcal N_{ik}(C_vt)\right).
\]
By hypothesis \eqref{2nd-order-diffb}, the last set has volume bounded by $Cst$ for some constant depending on $C_v$, the maximum velocity of $\theta^k$ and $\theta$. Since $C_v$ may be chosen independently of $s$ when $s$ is small enough, then $C$ is independent of $s$ and $t$.
$\Box$.
\subsubsection{Proof of lemmata}
\label{sec::proof:lemma}
We are now ready to tackle the proof of Lemmata~\ref{lemma:deriv-integral} and ~\ref{lemma:continuity-integral} in this section.

{\bf Proof of Lemma~\ref{lemma:deriv-integral}}
In this proof, the rate of convergence of $\frac{o(t)}{t}$ towards $0$ depends on $s$ (as $s^{-1}$).
Let $f$ in $L^\infty(\Omega\times \mathbb{R})$ with gradient in $L^1(\Omega\times \mathbb{R})$ and $s$ small enough. For all $t\le cs$, Proposition~\ref{prop:main} asserts
\begin{eqnarray*}
\int_{{\mathcal L}_i(t)} f(x,t)dx&=& \int_{\theta_t({\mathcal L}_i(0))} f(x,t)dx + \mathcal O(st)\\
&=&\int_{{\mathcal L}_i(0)} f(\theta_t(x),t)|det(J\theta_t(x))|dx+ \mathcal O(st)
\end{eqnarray*}

Where $J\theta_t$ is the Jacobian matrix of $\theta_t$. 

Using $\theta_t(x)=x+tV_0(x)+o_{L^\infty}(t)$, we then have (see \cite{henrot2006variation})
\[
f(\theta_t(x))=f(x,0)+t\partial_tf(x,0)+t\nabla_x f(x,0)\cdot V_0(x)+o_{L^1}(t),
\]
\[
|det(J\theta_t(x))|=1+tdiv(V_0)+o_{L^\infty}(t), 
\]
where $o_{L^a}(t)$ is a time dependent function that, when divided by $t$ goes towards zero in $L^a$ norm as $t$ goes to zero.
The rate of convergence depends on the Lipschitz norm of $V_t$ which scales as $s^{-1}$.

Then finally
\begin{eqnarray}
&&\int_{{\mathcal L}_i(t)} f(x,t)dx- \int_{{\mathcal L}_i(0)} f(x,0) dx 
=t \alpha_f(s) +o(t)+\mathcal O(st) 
\label{eq:degeu:Jonas}
\\ &&\text{ with }\alpha_f(s) =\int_{{\mathcal L}_i(0)}  \left(\partial_tf(x,0)+\nabla_x f(x,0)\cdot V_0(x)+f(x,0)div(V_0)\right)dx.
\nonumber
\end{eqnarray}
Recall that $V_0$ depends on $s$, hence $\alpha_f$ depends on $s$.

A Stokes formula yields
\[\alpha_f(s) =\int_{{\mathcal L}_i(0)}  \partial_tf(x,0)dx +\int_{\partial \mathcal L_i(0)} f(x,0)V_0(x)\cdot n_i d\sigma \]
This formula is true for Lipschitz domain and $\mathcal L_i(0)$ is Lipschitz because each $e^{ik}$ is Lipschitz as can be proven by a an implicit function theorem using~\eqref{2nd-order-diffb}.

Denote $Y(s)=\{ x \in \partial \mathcal L_i(0)\cap \partial \mathcal L_k(0) \text{ s.t. } \zeta(\frac{\delta_k(x)}{s})\ne 1\}$. Since
\[Y(s)\subset \bigcup_{j\ne k}\left(\mathcal N_{ij}(s) \cap e^{ik}\right),\]
we know $\displaystyle \lim_{s \rightarrow 0} \sigma(Y(s))=0$ by \eqref{2nd-order-contb}.

Since $f$ is in $W^{1,1}$, its trace on $e^{ik}$ is in $L^1(e^{ik})$ for the measure $\sigma$ \cite{brezis}. Hence, noticing that
\[1_{\partial L_i(0)}V_0(x)=\sum_{k\ne i} 1_{\partial \mathcal L_i(0)\cap \partial \mathcal L_k(0)} \zeta(\frac{\delta_k(x)}{s})V^k_0(x),
\]
the dominated convergence theorem asserts that $\alpha(s)$ converges as $s$ goes to zero towards
\[\lim_{s \rightarrow 0} \alpha_f(s) :=\alpha_f(0)=\int_{{\mathcal L}_i(0)}  \partial_tf(x,0)dx +\sum_{k\ne i}\int_{\partial \mathcal L_i(0)\cap \partial \mathcal L_k(0)} f(x,0)V^k_0(x)\cdot n_i d\sigma. \] 
Now denote,
\[r(t)=t^{-1}\left(\int_{{\mathcal L}_i(t)} f(x,t)dx- \int_{{\mathcal L}_i(0)} f(x,0) dx\right).\]
Let $t$ go to zero in \eqref{eq:degeu:Jonas}, we have $\limsup_{0^+} r(t)=\alpha_f(s)+\mathcal O(s)$ and $\liminf_{0^+} r(t)=\alpha_f(s)+\mathcal O(s)$. Letting $s$ goes to zero shows that $\lim_{0^+} r(t) $ exists and is equal to $\alpha_f(0) $ which proves lemma~\ref{lemma:deriv-integral}.

{\bf Proof of lemma~\ref{lemma:continuity-integral} }
The proof of this proposition owes so much to \cite{kitagawa2016newton}, proposition 3.2 that we even take the same notations. Consider the following partition of $ \mathcal L_i(t) \cap  \mathcal L_k(t)$ :
\begin{eqnarray*}
A_t&=&\left\{ \theta^k_t(x)\text{ s.t. }  \theta^k_r(x) \in \mathcal L_i(r) \cap  \mathcal L_k(r) \quad \forall r \in [0,s]\right\}\\
B_t&=&\left\{x\in  \mathcal L_i(t) \cap \mathcal L_k(t) \text{ s.t. } x\notin A_t\right\}
\end{eqnarray*}
It is clear that for all $t\le s$, we have
\[\xi(t)=\int_{A_t} f(x,t)d\sigma+\int_{B_t} f(x,t)d\sigma \]
In \cite{kitagawa2016newton}, in the first part of the proof of Proposition 3.2, the authors show that 
\[\lim_{s\rightarrow 0^+}\int_{A_s} f(x,s)d\sigma = \int_{A_0} f(x,0)d\sigma,\]
while actually controlling the convergence rate by the modulus of continuity of $f$. The reason is that $\theta^k_t$ is a Lipschitz diffeomorphism between 
$A_t$ and $A_0$ and that a change of variable allows to prove continuity. Note that no regularity assumption is made on the set $A_t$ except that its $d-1$ Hausdorff measure is bounded, which is exactly hypothesis~\eqref{2nd-order-conta}.
In order to prove that the sets $B_t$ are small with respect to the measure $d\sigma$, we follow a slightly simpler and quicker path than \cite{kitagawa2016newton} due to the fact that we use a stronger hypothesis in~\eqref{2nd-order-contb}.

First if $x=\theta^k_t(x_0)\in B_t$, then there exists $r\in [0,s]$,  such that :
\begin{equation}
\label{eq::BTcorner}
\theta^k_r(x) \in \partial \Omega \bigcup_{j\notin\{i,k\} } (u^{ij}_r)^{-1}(0)
\end{equation}

Indeed if $x\in \mathcal B_t$ then $\theta^k_r(x)$ is in $\mathcal L_i(r)\cap  \mathcal L_k(r)$ for $r=t$ and strictly outiside this set for some $r=r_1$. Recalling that

 \[ \mathcal L_i(t)\cap  \mathcal L_k(t)=\left\{ x \in \Omega \text{ s.t. } u^{ik}_t(x)=0 \text{ and } u^{ij}_t(x) \le 0 \text{ and } u^{kj}_t(x) \le 0 \quad \forall j\notin{\{i,k\}}\right \},\]
and that $\theta^k_t$ preserves the level-set $0$ of $u^{ik}_t$, then for some $r$, we must have by the intermediate value theorem either $\theta^k_r(x) \in \partial \Omega$ or $u^{ij}_r(\theta^k_r(x))=0$ or $u^{kj}_r(\theta^k_r(x))=0$. Finally if $u^{kj}_r(\theta^k_r(x))=0$, then  implies that $u^{ij}_r(\theta^k_r(x))=0$, since $u^{ik}_r(\theta^k_r(x))=0$.

 Suppose that we are in the case $u^{ij}_r(\theta^k_r(x_0))=0$ in~\eqref{eq::BTcorner}, then $ 
\theta^k_r(x_0)$ is in  $(u^{ij}_r)^{-1}(0)$, which, by finite velocity of $\theta^j$, is at distance at most $C_vs$ of $e^{ij}=(u^{ij}_0)^{-1}(0)$. By finite velocity of $\theta^k$, $x_0$ is at distance at most $C_vs$ of $\theta^k_r(x_0)$, meaning that $x_0$ is at distance at most $2C_vs$ of $e^{ij}$. Since $u^{ik}_0(x_0)=0$, we have that $x_0\in C_0:=e^{ik}\cap \mathcal N^\star_{ik}(2C_vs)$ which $d\sigma$ goes to zero as $s$ goes to zero by~\eqref{2nd-order-contb}. Finally 
\[B_t\subset \theta^k_t(C_0),\]
since $\theta^k_t$ is a Lipschitz diffeomorphism. Hence as $s$ goes to zero, $d\sigma (B_{t})$ goes to $0$ and hence.
\[\lim_{s\rightarrow 0^+}\int_{B_{t}} f(x,t)d\sigma =0= \lim_{s\rightarrow 0^+}\int_{B_0} f(x,0)d\sigma,\]
$\Box$
\subsection{Proof of the results of Section~\ref{sec::Main results Section}}
The goal of this section is to prove the different results of Section~\ref{sec::Main results Section}. We begin by Proposition~\ref{prop::differentiability}.
Suppose first $z^i\mapsto c(z^i,x)$ is differentiable $\nu$ a.e. for all $i$ and that $\nu$ is a positive Borelian measure of finite mass and rewrite $\tilde g$ as 
\[\tilde g(t)=\int_{\Omega} \psi^\star(z,\phi,x)d\nu(x)\]
where
 \[\psi^\star(z,\phi,x)=\min_i \psi^i(z,\phi,x) \quad \text{ with } \psi^i(z,\phi,x)=c(z^i,x)-\phi^i,\]
As the minimum of a finite number of differentiable functions, $\psi^\star$ is measurable and is $\nu$-a.e  directionally derivable with formula
\[{\psi^\star}'(x):=\lim_{t\rightarrow 0^+}\frac{\psi^\star(z+td_z,\phi+td_\phi,x)-\psi^\star(z,\phi,x) }{t } =\min_{i \in \mathcal M(x)} \langle \nabla \psi^i(x),d\rangle ,\]
with $d=(d_z,d_\phi)$ and $\nabla$ is the gradient with respect to $z$ and $\phi$. Recall for that purpose that $\mathcal M(x)$ is exactly the argmin of $\psi^i(x)$.
The function $(\psi^\star)'$ is seen to be measurable when rewritten as :
\[{\psi^\star}'(x)= \sum_{A \subset \integset{1}{n}} \Un_{\{ \mathcal M^{-1}(A)\}}(x) \min_{i\in A} \langle \nabla \psi^i(x),d\rangle,\]
the  set $\mathcal M^{-1}(A)$ being measurable since 
\[(\mathcal M(x)=A) \Leftrightarrow \left(\psi_i(x)=\psi^\star(x) \quad \forall i \in A \text{ and } \psi_i(x)>\psi^\star(x) \quad \forall i \in A^c\right)\]
A standard dominated convergence theorem asserts that the  directional derivative of $\tilde g$ exists and is given by : 
\[\tilde g'=\int_\Omega (\psi^\star)'(x)d\nu(x)\]
and we retrieve
\[\tilde g'=\sum_{A \subset \integset{1}{n}}  \int_{\mathcal M^{-1}(A)} \min_{i\in A} \langle \nabla \psi^i(x),d\rangle d\nu(x)\]
which is exactly the formula of Proposition~\ref{prop::differentiability}. When one supposes that $\nu(\mathcal M^{-1}(A))=0$ as soon as the cardinal of $A$ is strictly greater than $1$, then $\tilde g'$ is linear w.r.t $d$ and hence differentiable. In this case, we have $\mathcal  M^{-1}(\{i\})=\mathcal L_i(z,\phi)$ up to a set of zero $\nu$-measure, and hence
\[\nabla \tilde g(z,\phi)=\sum_{i=1}^n \int_{\mathcal L_i(z,\phi)} \nabla \psi_i(z,\phi,x)d\nu(x)\]
In order to prove the continuity of the gradient of $\tilde g$, we use the following technical lemma with $f=\nabla \psi_i(z,\phi,x)$.

\begin{lemma}
\label{lemma::continuity}
Suppose that $\nu(\mathcal M^{-1}(A))=0$ if $\#(A)\ge 2$. If $f$ is continuous with respect to $z,\phi$ for almost every $x$ and if there exists  $l\in L^1(\Omega,\nu)$ such that $|f(z,\phi,x)|\le l(x)$ $\nu$-a.e. for all $(z,\phi)$  then 
\[F_i : (z,\phi)\mapsto \int_{\mathcal L_i(z,\phi)}f(z,\phi,x)d\nu\] is continuous. 
\end{lemma}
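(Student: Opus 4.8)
The plan is to recast $F_i$ as an integral over all of $\Omega$ against a parameter-dependent integrand and then invoke the dominated convergence theorem. First I would write
\[
F_i(z,\phi)=\int_\Omega \carac{\mathcal L_i(z,\phi)}(x)\,f(z,\phi,x)\,d\nu(x),
\]
using the identity $x\in\mathcal L_i(z,\phi)\Leftrightarrow i\in\mathcal M(z,\phi,x)$. Since $(z,\phi)$ ranges over a finite-dimensional (hence metric) space, continuity of $F_i$ is equivalent to sequential continuity, so I would fix an arbitrary sequence $(z_m,\phi_m)\to(z,\phi)$ and prove $F_i(z_m,\phi_m)\to F_i(z,\phi)$. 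The domination is immediate: for every $m$ the integrand is bounded $\nu$-a.e. by $l$, because $|\carac{\mathcal L_i}\,f|\le|f|\le l$ with $l\in L^1(\Omega,\nu)$.

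The heart of the argument is the $\nu$-a.e. pointwise convergence of the integrands, which I would establish by splitting $\Omega$ according to the cardinality of $\mathcal M(z,\phi,x)$ evaluated at the \emph{limit} parameter. By the standing hypothesis, the set $\bigcup_{\#A\ge 2}\mathcal M^{-1}(A)$ on which the minimum is attained by at least two indices is $\nu$-negligible, so it suffices to treat points $x$ for which $\mathcal M(z,\phi,x)=\{k_0\}$ is a singleton. For such an $x$, writing $\psi^j(z,\phi,x)=c(z^j,x)-\phi^j$, the quantity $\eta:=\min_{j\ne k_0}\big(\psi^j(z,\phi,x)-\psi^{k_0}(z,\phi,x)\big)$ is strictly positive, being the minimum over the finitely many $j\ne k_0$ of strictly positive gaps.

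The hypotheses of Proposition~\ref{prop::differentiability} in force here guarantee that each $z\mapsto c(z^j,x)$, hence each $\psi^j(\cdot,\cdot,x)$, is continuous at $(z,\phi)$ for $\nu$-a.e. $x$. Therefore, for $m$ large enough the strict inequalities $\psi^{k_0}(z_m,\phi_m,x)<\psi^j(z_m,\phi_m,x)$ persist for all $j\ne k_0$, so that $\mathcal M(z_m,\phi_m,x)=\{k_0\}$ as well. Consequently, for $m$ large, $x\in\mathcal L_i(z_m,\phi_m)$ holds precisely when $k_0=i$, which is exactly the condition for $x\in\mathcal L_i(z,\phi)$; hence $\carac{\mathcal L_i(z_m,\phi_m)}(x)=\carac{\mathcal L_i(z,\phi)}(x)$ for all large $m$, while $f(z_m,\phi_m,x)\to f(z,\phi,x)$ by the assumed continuity of $f$. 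This yields convergence of the integrand off the negligible boundary set.

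Having established $\nu$-a.e. convergence together with the integrable dominating function $l$, the dominated convergence theorem gives $F_i(z_m,\phi_m)\to F_i(z,\phi)$, which proves the lemma. The only delicate step is the pointwise convergence of the indicator $\carac{\mathcal L_i}$: it relies entirely on the null-measure hypothesis $\nu(\mathcal M^{-1}(A))=0$ for $\#A\ge 2$ to discard the boundary where cell membership can jump, combined with the stability of a \emph{strict} unique minimizer under small perturbations of the parameters, so that away from this boundary the indicator is eventually constant along the sequence.
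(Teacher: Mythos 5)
Your proposal is correct and follows essentially the same route as the paper: rewrite $F_i$ as an integral over $\Omega$ of $\carac{\mathcal L_i(z,\phi)}f$, show the integrand converges $\nu$-a.e. along any sequence $(z_m,\phi_m)\to(z,\phi)$ by using the persistence of strict inequalities at points where the minimizer is unique and discarding the $\nu$-null set where $\#\mathcal M(x)\ge 2$, then conclude by dominated convergence with the dominating function $l$. The only cosmetic difference is that you organize the pointwise argument around the unique minimizer $k_0$, whereas the paper splits into the cases $i\notin\mathcal M(x)$ and $\mathcal M(x)=\{i\}$; the substance is identical.
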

{\bf Proof of Lemma~\ref{lemma::continuity}} First recall that
\[{\mathcal L}_i(z,\phi)=\{x \in \Omega \text{ s.t. } \psi^i(z,\phi,x) \le \psi^\star(z,\phi,x)\}.\]
For a sequence  $(z_n,\phi_n)$ that goes to $(z,\phi)$, denote
\[h_n(x)=f(z_n,\phi_n,x)\Un_{\psi^i(z_n,\phi_n,x)\le \psi^\star(z_n,\phi_n,x)}(x)\Un_{\Omega}(x),\]
and $h=\Un_{\mathcal L_i} f$.
Then 
\begin{align*}
    F_i(z_n, \phi_n)& = \int_{\Omega} h_n d\nu, & F_i(z, \phi)& = \int_{\Omega} h d\nu. 
\end{align*}
Moreover $h_n \leq l$ for all $n$.

If $x$ is such that $i \notin \mathcal M(x)$, that is $\psi^i(z,\phi,x) > \psi^\star(z,\phi,x)$, then $h_n(x)$ converges to $h(x)$.
If $x$ is such that $\mathcal M(x)=\{i\}$, then by continuity of $\psi^j(z,\phi,x)$ for all $j\ne i$, $\psi^i(z_n,\phi_n,x) = \psi^\star(z_n,\phi_n,x)$ for $n$ sufficiently large, hence $h_n(x)$ converges to $h(x)$. Then $h_n$ converges to $h=\Un_{\mathcal L_i} f$ except possibly on the sets where $\mathcal M^{-1}(A)$ is of cardinal greater or equal than $2$, which is, by hypothesis, of zero $\nu$-measure. 

Since $h_n \leq l$ for all $n$, a dominated convergence theorem ensures the continuity of the integral with respect to $(z,\phi)$.
$\Box$

The proof of Theorem~\ref{prop::2differentiability} is straightforward. We apply Lemma~\ref{lemma:deriv-integral} 
to $f(z,x)=\nabla_zc(z^i,x)m(x)$ or $f(z,x)=m(x)$, where $m$ is the density of $\nu$. Then we apply Lemma~\ref{lemma:continuity-integral} to the formula of the second order derivative in order to prove second order continuity.

\section{Numerical experiments}
\label{sec::numresult}

In this section we test a second order algorithm for the $2$-Wasserstein distance, when $c$ is the Euclidean cost.
Two problems will be solved: \textit{Blue Noise} and \textit{Stippling}. In both cases, we optimize a measure $\mu$ of the form $\mu(z,m)=\sum_{i=1}^n m^i \delta_{z^i}$, so that the 2-Wasserstein distance $W_2(\mu(z,m),\nu)$ is minimal.
\begin{itemize}
\item \textit{Blue Noise: } 
Here the weights $m^i$ are fixed. Hence the functional to minimize reads as :
\begin{equation}
\label{eq::bluenoise}
\inf_{z \in \RR^{nd}}G_B(z) \text{ with } G_B(z)=W_2(\mu(z,m),\nu)=\max_\phi g(\phi,z,m).
\end{equation}
\item \textit{Stippling: }
This problem consists in optimizing in $m$ and in $z$ simultaneously : 
\begin{equation*}
 \inf_{z \in \RR^{nd}} \inf_{m \in \Delta_n} W_2(\mu(z,m),\nu) ,
\end{equation*}
where $\Delta_n$ is the canonical simplex.
\end{itemize}

The \textit{Stippling} problem is actually easier than the \textit{Blue Noise} problem. Following the discussion of Section~\ref{intro::voronoi}, optimizing the mass amounts to set $\phi= 0$ and $\tilde m_i = \nu(\mathcal{V}_i)$:
\begin{equation}
\label{eq::stippling}
\inf_{z \in \RR^{nd}}G_S(z) \text{ with } G_S(z)=W_2(\mu(z,\tilde m),\nu)= g(0,z,\tilde m),
\end{equation}
where $G_S$ is the Voronoi energy defined in \eqref{eq::defin::Voronoienergy}. 
Hence no optimization procedure is required in $\phi$ and $\tilde m$ is merely given by computing the $\nu$-mass of each Voronoi cells.

\paragraph{Formulas}
Recall that in the Euclidean case, the formulas for $g$ boil down to
\begin{eqnarray*}
\frac{\partial g}{\partial z^i}&=&M^i(z^i-\bar z^i) \\
\frac{\partial^2 g}{\partial z^i z^j}&=&\int_{\partial {\mathcal L}_i\cap \partial {\mathcal L}_j} 
\frac{(z^j-x)(z^i-x)^Tm(x)}{\Vert z^i-z^j\Vert}d\sigma \text{ if }i\ne j\\
\frac{\partial^2 g}{\partial (z^i)^2}&=&M^i - \sum_{i\ne j}\frac{\partial^2 g}{\partial z^i z^j}
\end{eqnarray*}
where $M^i=\int_{\mathcal L_i} d\nu$ is the mass of the $i^{th}$ Laguerre cell and $\bar z^i=\int_{\mathcal L_i} x d\nu/M^i$ is its barycenter. 
\subsection{Lloyd's algorithm}
\begin{algorithm}
\caption{Lloyd's algorithm with Wolfe stabilization. \label{alg:mainalgo}}
\begin{algorithmic}[1]
\State \textbf{Inputs:} 
\State Initial guess $z^0$ 
\State target measure $\nu$
\State \textbf{Outputs:} 
\State An approximation  of the solution of \eqref{eq::bluenoise}.
\While{Until convergence}
\State Compute $\phi^\star(z^k)$.
\State Compute $\bar z^k_i$ the barycenter of the $i^{th}$ Laguerre cell $\mathcal L_i(z^k,\phi^\star(z^k))$.
\State Set $d^k=\bar z^{k}_i-z^{k}_i$
\State Set $\sigma^k=1$ and $z^{k+1}_i=z^{k}_i +\sigma^kd^k$
\While{ not \ref{eq::Wolfe} conditions fulfilled}
\State $\sigma^k=\sigma^k/2$ and $z^{k+1}_i=z^{k}_i +\sigma^kd^k$.
\EndWhile
\State $k=k+1$
\EndWhile
\State Return $z^{k}$.
\end{algorithmic}
\end{algorithm}

The gradient algorithm for computing the \textit{Blue Noise} (resp. the \textit{Stippling}  problem) is to move each point in the direction of the barycenter of its Laguerre cell (resp. Voronoi cell). Taking the diagonal metric given by the mass of the cells $M=(M^i)_{i=1..n}$ (which is a decent approximation of the Hessian), yields the following formula for the gradient of $G$
\begin{equation}
\label{eq::gradient with metric}
\langle a,b \rangle_M=\sum_i M^ia^ib^i \Longrightarrow \nabla G(z_k) =z^i-\bar z^i.
\end{equation}

A fixed step gradient Algorithm with step $1$ is to set each point $z^i$ exactly at the location of the barycenter $\bar z^i$. This algorithm is well known as a Lloyd-like or a relaxation algorithm  \cite{lloyd1982least,du1999centroidal}. An improvement of Lloyd's algorithm is to ensure a Wolfe step condition \cite{bertsekas}. 
\begin{equation}
\label{eq::Wolfe}
\tag{Wolfe}
G(z_{k+1})<G(z_k)+10^{-4}\langle \nabla G(z_k),z_{k+1}-z_k\rangle_M
\end{equation}
This naturally leads to algorithm~\ref{alg:mainalgo}. The only difference between the \textit{Stippling} and \textit{Blue Noise} problems lies in the choice of $\phi^\star(z)$. It is chosen equal to $0$ in the \textit{Stippling} problem and to $\textrm{argmax}_\phi g$ in the \textit{Blue Noise} problem.

Numerical experiment shows that it is not necessary to check for Wolfe's second condition which ensures that the step is not too small, indeed Lloyd's algorithm (and Newton's algorithm) have a natural step $\sigma^k=1$.
\subsection{Newton's Algorithm}
The second algorithm is a Newton algorithm. Denoting by $H$ the Hessian, in the \textit{Stippling} case, we have:
\[H_{zz}G_{S}=H_{zz}g\]
The computation of $H_{zz} G_B$ for the \textit{Blue Noise} case is more involved. A chain rule yields
\[H_{zz}G_{B}=H_{zz}g+H_{z\phi}g \cdot \nabla_z \phi^\star\]
The existence of $\nabla_z \phi^\star$ is given by an implicit function theorem, from
\[\nabla_\phi g(\phi^\star(z),z,m)=0.\]
Differentiating the above equation with respect to $z$ and applying the chain rule, we get
\[H_{z\phi} g+H_{\phi\phi} g\nabla_{z}\phi^\star=0\]
and hence
\[H_{zz}G_{B}=H_{zz}g-H_{z\phi} g  (H_{\phi\phi}g)^{-1}  H_{\phi z}g. \]
The implicit function theorem that proves existence of $\nabla_{z}\phi^\star$ requires the matrix $H_{\phi\phi}g$ to be invertible. Note that constant $\phi$ are always part of the kernel of $g$ but upon supposing that $\phi$ has zero average, the invertibility of $H_{\phi\phi}g$ is verified throughout the optimization procedure.

Once the Hessian is computed, the Newton algorithm with preconditioning by the matrix $M$ amounts to changing in Algorithm~\ref{alg:mainalgo} the descent direction $d^k$ by a solution to the linear problem 
 \begin{equation}
 \label{eq::system Newton}
 AM^{1/2}d^k=-M^{1/2}\nabla G(z^k) \quad \text{ with } A=(M^{-1/2}H_{zz}G M^{-1/2}),
 \end{equation}
and $\nabla G(z^k)$ is defined in~\eqref{eq::gradient with metric} as the gradient with respect to the metric $M$.
Newton's algorithm fails if the Hessian is not positive definite, hence we propose a work-around based on the conjugate gradient method on the system \eqref{eq::system Newton}. Recall that conjugate gradient method solve exactly the problem in the Krylov space and that the residues of the conjugate gradient method form an orthogonal basis of this Krylov space, hence are equal (up to a normalization procedure) to the Lanczos basis. Denote $\pi_n$ the projection on the Krylov space at iteration $n$, the matrix $\pi_n A \pi_n$ is tridiagonal in the Lanczos basis hence the computation of its determinant is a trivial recurrence \cite{saad2003iterative}. By monitoring the sign of the determinant throughout iterations one checks the positiveness of the matrix. The conjugate gradient algorithm is stopped whenever the matrix $A$ stops being positive definite. The descent direction is then given by
\[(\pi_n A\pi_n )M^{1/2}d^k=-\pi_n M^{1/2}\nabla G(z^k),\]
By convention for $n=0$, we solve $d^k=-\nabla G(z^k)$. If $A$ is positive, then the problem~\eqref{eq::system Newton} is solved exactly.
\subsection{Other considerations}
The computation of $\phi^{\star}$ in the \textit{Blue Noise} problem is a standard unconstrained concave maximization procedure with knowledge of second order derivatives. In order to compute $\phi^\star$ in a robust manner, we settled on a Levenberg-Marquardt type algorithm: denoting $H(\sigma)=H_{\phi\phi}g-\frac{1}{\sigma} Id$, we take as descent direction $-H(\sigma)^{-1}\nabla g(\phi)$, where $\sigma$ is reduced until Wolfe's first order conditions are met. 
In the \textit{Stippling} problem, the computation of $\phi^{\star}=0$ is trivial.

The Laguerre tessellation is computed by CGAL~\cite{cgal}. All the tests where performed using a standard Lena image as background measure $\nu$ which has been discretized as bilinear by pixel ($Q1$ finite element method). In the \textit{Blue Noise} problem, the mass $m$ is constrained to be equal to $\frac{1}{n}$ for all Diracs.

\subsection{Numerical results}
\subsection{Direct comparaison of the algorithms}

For the first example, we search the optimal positions of the Dirac masses for either the  \textit{Blue Noise} or \textit{Stippling} problem. Three methods are benchmarked, the Gradient method (Lloyd-like method), the Newton method discussed in the previous section and a \textit{LBFGS} method with the memory of the 8 previous iterations. Tests are performed for 1K and 10K points.
The evolution of the cost functions and the $L_2$ norms of the gradient are displayed throughout iterations. 
Figure~\ref{Fig:Voronoi-optim} displays the results obtained for the \textit{Stippling} problem whereas Figure~\ref{fig::OT-optim} displays the results for the \textit{Blue Noise} problem.

\pgfplotstableread[col sep=comma]{LL-1K-normgradient.dat}{\LLuKnormGradient}
\pgfplotstableread[col sep=comma]{LL-1K-costfunction.dat}{\LLuCostFunction}
\pgfplotstableread[col sep=comma]{LL-10K-normgradient.dat}{\LLdKnormGradient}
\pgfplotstableread[col sep=comma]{LL-10K-costfunction.dat}{\LLdCostFunction}

\begin{figure}[!hbt]
\begin{tabular}{ccc}
&$L_2$ norm of the gradient & Objective function \\
\begin{sideways}
\hspace{1cm} $1000$ points 
\end{sideways}
&
\begin{tikzpicture}[every node/.append style={font=\tiny}]
\begin{axis}[ymode=log,width=0.45\textwidth]
\addplot [color=black]
          table[x expr=\thisrow{iteration},y={Lloyd}]{\LLuKnormGradient};
\addplot [color=gray,dash pattern=on 1pt off 1pt,very thick]
          table[x expr=\thisrow{iteration},y={Newton}]{\LLuKnormGradient};
\addplot [color=lightgray,dash pattern=on 1pt off 1pt on 3pt off 3pt,very thick]
          table[x expr=\thisrow{iteration},y={LBFGS}]{\LLuKnormGradient};
\end{axis}
\end{tikzpicture}
&
\begin{tikzpicture}[every node/.append style={font=\tiny}]
\begin{axis}[width=0.45\textwidth,ymax=0.0002,xmax=200]
\addplot [color=black]
          table[x expr=\thisrow{iteration},y={Lloyd}]{\LLuCostFunction};
\addplot [color=gray,dash pattern=on 1pt off 1pt,very thick]
          table[x expr=\thisrow{iteration},y={Newton}]{\LLuCostFunction};
\addplot [color=lightgray,dash pattern=on 1pt off 1pt on 3pt off 3pt,very thick]
          table[x expr=\thisrow{iteration},y={LBFGS}]{\LLuCostFunction};
\end{axis}
\end{tikzpicture}
 \\
\begin{sideways}
\hspace{2cm} $10000$ points 
\end{sideways}
&
\begin{tikzpicture}[every node/.append style={font=\tiny}]
\begin{axis}[ymode=log,width=0.45\textwidth,legend style={ at={(0.5,-0.15)}, anchor=north, legend columns=2}]
\addplot [color=black]
          table[x expr=\thisrow{iteration},y={Lloyd}]{\LLdKnormGradient};
\addplot [color=gray,dash pattern=on 1pt off 1pt,very thick]
          table[x expr=\thisrow{iteration},y={Newton}]{\LLdKnormGradient};
\addplot [color=lightgray,dash pattern=on 1pt off 1pt on 3pt off 3pt,very thick]
          table[x expr=\thisrow{iteration},y={LBFGS}]{\LLdKnormGradient};
\legend{Lloyd,Newton,$8$-LBFGS}
\end{axis}
\end{tikzpicture}
&
\begin{tikzpicture}[every node/.append style={font=\tiny}]
\begin{axis}[width=0.45\textwidth,legend style={ at={(0.5,-0.15)}, anchor=north, legend columns=2},ymax=0.00002,xmax=200]
\addplot [color=black]
          table[x expr=\thisrow{iteration},y={Lloyd}]{\LLdCostFunction};
\addplot [color=gray,dash pattern=on 1pt off 1pt,very thick]
          table[x expr=\thisrow{iteration},y={Newton}]{\LLdCostFunction};
\addplot [color=lightgray,dash pattern=on 1pt off 1pt on 3pt off 3pt,very thick]
          table[x expr=\thisrow{iteration},y={LBFGS}]{\LLdCostFunction};
\legend{Lloyd,Newton,$8$-LBFGS}
\end{axis}
\end{tikzpicture}
\end{tabular}
\caption{\textit{Stippling} problem with 1Kpts (Top) and 10 Kpts (Bottom). Left : norm of the gradient, Right : evolution of the cost function.}
\label{Fig:Voronoi-optim}
\end{figure}
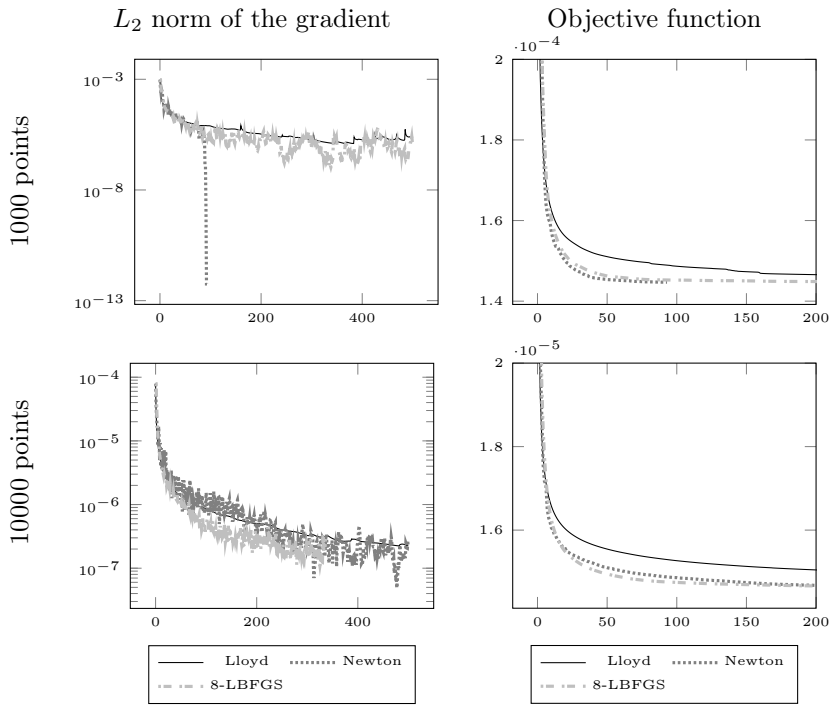

Our interpretation of Figure~\ref{Fig:Voronoi-optim} is the following: in the 1K points problem, the best methods for finding critical points and minimum are, by decreasing order, Newton, $8$-BFGS and Lloyd, which is coherent with theory. For the 10K points problems, the three different methods seem equivalent. Our interpretation of the 10K points behavior is the combination of two factors. First we believe that an augmentation of the number of points reduces the basin of attraction of local minimum. Indeed, in our test the Newton method failed to attain locally convex points (the Hessian always had a negative eigenvalue throughout iterations). The second effect of the augmentation of the number of points is that numerical errors trickle down the algorithm, eventually preventing the Newton method to accurately find the minimum.

\pgfplotstableread[col sep=comma]{OT-1K-normgradient.dat}{\OTuKnormGradient}
\pgfplotstableread[col sep=comma]{OT-1K-costfunction.dat}{\OTuCostFunction}
\pgfplotstableread[col sep=comma]{OT-10K-normgradient.dat}{\OTdKnormGradient}
\pgfplotstableread[col sep=comma]{OT-10K-costfunction.dat}{\OTdCostFunction}

\begin{figure}
\begin{tabular}{ccc}
&$L_2$ norm of the gradient & Objective function \\
\begin{sideways}
\hspace{1cm} $1000$ points 
\end{sideways}
&
\begin{tikzpicture}[every node/.append style={font=\tiny}]
\begin{axis}[ymode=log,width=0.45\textwidth]
\addplot [color=black]
          table[x expr=\thisrow{iteration},y={Lloyd}]{\OTuKnormGradient};
\addplot [color=gray,dash pattern=on 1pt off 1pt,very thick]
          table[x expr=\thisrow{iteration},y={Newton}]{\OTuKnormGradient};
\addplot [color=lightgray,dash pattern=on 2pt off 1pt,very thick]
          table[x expr=\thisrow{iteration},y={LBFGS}]{\OTuKnormGradient};
\end{axis}
\end{tikzpicture}
&
\begin{tikzpicture}[every node/.append style={font=\tiny}]
\begin{axis}[width=0.45\textwidth,ymax=0.0002,xmax=200]
\addplot [color=black]
          table[x expr=\thisrow{iteration},y={Lloyd}]{\OTuCostFunction};
\addplot [color=gray,dash pattern=on 1pt off 1pt,very thick]
          table[x expr=\thisrow{iteration},y={Newton}]{\OTuCostFunction};
\addplot [color=lightgray,dash pattern=on 2pt off 1pt,very thick]
          table[x expr=\thisrow{iteration},y={LBFGS}]{\OTuCostFunction};
\end{axis}
\end{tikzpicture}
\\
\begin{sideways}
\hspace{2cm} $10000$ points 
\end{sideways}
&
\begin{tikzpicture}[every node/.append style={font=\tiny}]
\begin{axis}[ymode=log,width=0.45\textwidth,legend style={ at={(0.5,-0.15)}, anchor=north, legend columns=2}]
\addplot [color=black]
          table[x expr=\thisrow{iteration},y={Lloyd}]{\OTdKnormGradient};
\addplot [color=gray,dash pattern=on 1pt off 1pt,very thick]
          table[x expr=\thisrow{iteration},y={Newton}]{\OTdKnormGradient};
\addplot [color=lightgray,dash pattern=on 3pt off 1pt,very thick]
          table[x expr=\thisrow{iteration},y={LBFGS}]{\OTdKnormGradient};
\legend{Lloyd,Newton,$8$-LBFGS}
\end{axis}
\end{tikzpicture}
&
\begin{tikzpicture}[every node/.append style={font=\tiny}]
\begin{axis}[width=0.45\textwidth,legend style={ at={(0.5,-0.15)}, anchor=north, legend columns=2},ymax=0.00005,xmax=200]
\addplot [color=black]
          table[x expr=\thisrow{iteration},y={Lloyd}]{\OTdCostFunction};
\addplot [color=gray,dash pattern=on 1pt off 1pt,very thick]
          table[x expr=\thisrow{iteration},y={Newton}]{\OTdCostFunction};
\addplot [color=lightgray,dash pattern=on 3pt off 1pt,very thick]
          table[x expr=\thisrow{iteration},y={LBFGS}]{\OTdCostFunction};
\legend{Lloyd,Newton,$8$-LBFGS}
\end{axis}
\end{tikzpicture}
\end{tabular}
\caption{\textit{Blue Noise} problem with 1Kpts (Top) and 10 Kpts (Bottom). Left : norm of the gradient, Right : evolution of the cost function.}
\label{fig::OT-optim}
\end{figure}
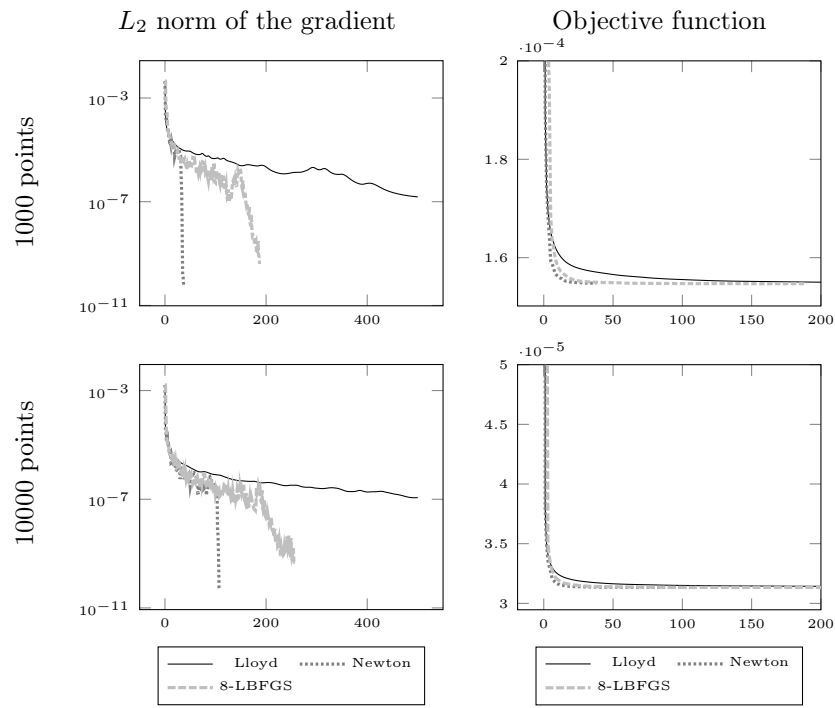

As a conclusion, we find that using second order derivative information in computing Centroidal Voronoi Tessellation (\textit{Stippling} problem) is useful for a small number of points, which renders the application range of this method quite limited. Note that very similar tests have already been performed in \cite{liu2009centroidal}.
The main conclusions of the tests in Figure~\ref{Fig:Voronoi-optim} and Figure~\ref{fig::OT-optim} is that the basin of attraction of the Newton method for the \textit{Blue Noise} problem seems bigger than the one of the \textit{Stippling} problem. Hence a second order method for the \textit{Blue Noise} problem is of interest as the number of points rises.
\subsection{Adding a point}

In order to exhibit the helpfulness of second order method for \textit{Stippling}, we build an example where the classic gradient algorithm fails to converge. Empirically the main drawback of Lloyd algorithm is its lack of \textit{globalisation}. Suppose one has optimized the position of $n$ Dirac masses for the \textit{Blue Noise} or \textit{Stippling} problem and that one adds one mass at some random location and wants to optimize the position of the $n+1$ Dirac masses. Lloyd's algorithm for the \textit{Stippling} problem will converge slowly due to the fact that the new point will modify the Voronoi cells of its neighbours only, whereas the \textit{Blue Noise} functional is global and every Laguerre cell will be modified at the first iteration. Hence  Lloyd's algorithm for the \textit{Stippling} problem has to wait for the information to propagate through each Voronoi cell, like the peeling of an onion, one layer at each iteration. The advantages of the second order method can then be seen, since the Hessian encodes the connectivity and propagates instantly the information. This effect should be less important for the  \textit{Blue Noise} case where information is propagated instantly. In Figure~\ref{fig::addpoint}, we exhibit this effect for the \textit{Blue Noise} and \textit{Stippling} problem. We optimize with 1K pts with a second order method and then test either Lloyd's or Newton's method.

\pgfplotstableread[col sep=comma]{addpoint-normgradient-Stippling.dat}{\AddpointGradientStippling}
\pgfplotstableread[col sep=comma]{addpoint-costfunction-Stippling.dat}{\AddpointCostStippling}
\pgfplotstableread[col sep=comma]{addpoint-normgradient-OT.dat}{\AddpointGradientOT}
\pgfplotstableread[col sep=comma]{addpoint-costfunction-OT.dat}{\AddpointCostOT}

\begin{figure}
\begin{tabular}{cc}
\begin{tikzpicture}[every node/.append style={font=\tiny}]
\begin{axis}[ymode=log,width=0.45\textwidth,xmax=30]
\addplot [color=black]
          table[x expr=\thisrow{iteration},y={Lloyd}]{\AddpointGradientStippling};
\addplot [color=gray,dash pattern=on 1pt off 1pt,very thick]
          table[x expr=\thisrow{iteration},y={Newton}]{\AddpointGradientStippling};
\end{axis}
\end{tikzpicture}
&
\begin{tikzpicture}[every node/.append style={font=\tiny}]
\begin{axis}[width=0.45\textwidth,xmax=30]
\addplot [color=black]
          table[x expr=\thisrow{iteration},y={Lloyd}]{\AddpointCostStippling};
\addplot [color=gray,dash pattern=on 1pt off 1pt,very thick]
          table[x expr=\thisrow{iteration},y={Newton}]{\AddpointCostStippling};

\end{axis}
\end{tikzpicture} \\
\begin{tikzpicture}[every node/.append style={font=\tiny}]
\begin{axis}[ymode=log,width=0.45\textwidth,legend style={ at={(0.5,-0.15)}, anchor=north, legend columns=2},xmax=30]
\addplot [color=black]
          table[x expr=\thisrow{iteration},y={Lloyd}]{\AddpointGradientOT};
\addplot [color=gray,dash pattern=on 1pt off 1pt,very thick]
          table[x expr=\thisrow{iteration},y={Newton}]{\AddpointGradientOT};
\legend{Lloyd,Newton}
\end{axis}
\end{tikzpicture}
&
\begin{tikzpicture}[every node/.append style={font=\tiny}]
\begin{axis}[width=0.45\textwidth,legend style={ at={(0.5,-0.15)}, anchor=north, legend columns=2},xmax=30]
\addplot [color=black]
          table[x expr=\thisrow{iteration},y={Lloyd}]{\AddpointCostOT};
\addplot [color=gray,dash pattern=on 1pt off 1pt,very thick]
          table[x expr=\thisrow{iteration},y={Newton}]{\AddpointCostOT};
\legend{Lloyd,Newton}
\end{axis}
\end{tikzpicture}
\end{tabular}
\caption{Evolution of the cost function (Right) and the norm of the gradient (Left) for the \textit{Stippling} problem (Top) and the \textit{Blue Noise} problem (Bottom) when one point is added.}
\label{fig::addpoint}
\end{figure}
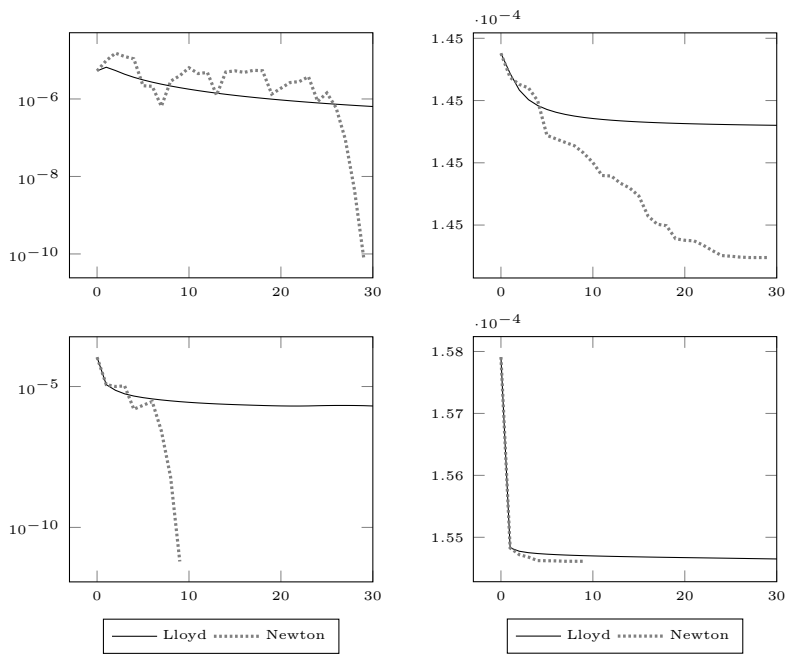

Our interpretation of Figure~\ref{fig::addpoint} lies mainly in the observation of the cost function. Lloyd's method for either the \textit{Blue Noise} or \textit{Stippling} method converge to a critical point in a fraction of the number of iterations needed for the random initialization. The objective function for the \textit{Stippling} problem decreases very slowly for  Lloyd's method compared to the Newton method whereas the decrease of the objective function for the \textit{Blue Noise} is comparable. We interpret this result as the "peeling layers" effect, only seen in the \textit{Stippling} problem, described earlier.
\section{Conclusion}
In this paper we have studied the conditions under which second order differentiability of the semi-discrete optimal transport with respect to position of the Dirac masses  holds for generic cost function $c$. This result encompasses the second order differentiability of the energy of a Voronoi diagram. We have numerically implemented the second order procedure for both the \textit{Blue Noise} and \textit{Stippling} problem. In the \textit{Stippling} problem, the numerical applications are limited by arithmetic precision and small basins of attraction. The \textit{Blue Noise} problem is less sensitive to theses effects. An interpretation of this fact is that the \textit{Blue Noise} problem is global, a change in the position of a mass as an effect on the whole set of masses, whereas in the \textit{Stippling} problem, a mass only sees its direct neighbours. The \textit{Blue Noise} problem is then a more stable problem than the \textit{Stippling} one. 

It is then of the highest interest to understand the smallness of the basins of attraction and the disposition of local minima for the two corresponding problem. It is also the aim of future work to understand optimal transportation between Dirac masses and non-regular background measures (say measures supported by curves) and the corresponding \textit{Blue Noise} problem. Such an application requires to differentiate the semi-discrete optimal transport with respect to parameters that describe the underlying background measure $\nu$.

\bibliography{Biblio}
\bibliographystyle{spmpsci}
\end{document}